\documentclass[12pt, leqno]{amsart}

\usepackage[OT2,T1]{fontenc}
\usepackage{indentfirst}
\usepackage{amstext}
\usepackage{amsthm}
\usepackage{amsopn}
\usepackage{amsfonts}
\usepackage{amsmath}
\usepackage{latexsym}
\usepackage[francais,english]{babel}
\usepackage{amscd}
\usepackage{amssymb}
\usepackage{amsmath}
\usepackage[all,cmtip]{xy}
\usepackage{leftidx}
\usepackage{graphicx}
\usepackage{etoolbox}
\patchcmd{\section}{\normalfont\scshape\centering}{\normalfont\bfseries}{}{}
\patchcmd{\subsection}{-.5em}{.5em}{}{}
\renewenvironment{proof}{{\noindent\bfseries Proof.}}{}

\setcounter{secnumdepth}{3} \setcounter{section}{-1}

\topmargin -0.1in

\headsep 0.4in

\oddsidemargin 0.4in

\evensidemargin 0.25in

\textwidth 5.5in \textheight 9in
%%%%Theorems%%%%%
%%%\swapnumbers
%\titleformat{\section}{\normalfont\bf}
\newtheorem{theo}{{Theorem}}[section]
\newtheorem{coro}[theo]{{Corollary}}
\newtheorem{lemma}[theo]{{Lemma}}
\newtheorem{prop}[theo]{Proposition}

\theoremstyle{definition}
\newtheorem{remark}[theo]{\textbf{Remark}}
\newtheorem{defn}[theo]{Definition}

\numberwithin{equation}{section}

\newtheorem{notation}[theo]{Notation}
%%%%%%%%%%%%%%%%%%%%%%%%%%%%%%%%%%%%%%%%%%%%%%%%%%%%%%%%%%%%%%%%%%%%%%%%
%%%%Abbreviation%%%%

\newcommand{\ol}{\overline}

%%%%%%%%%%%%%%%%%%%%%%%%%%%%%%%%%%%%%%%%%%%%%%%%%%%%%%%%%%%%%%%%%%%%%%%%

\newcommand{\rG}{\mathrm{G}}

\newcommand{\cC}{\mathcal{C}}

\newcommand{\Stab}{\mathrm{Stab}}

\newcommand{\Int}{\mathrm{Int}}

\newcommand{\Gal}{\mathrm{Gal}}

\newcommand{\cor}{\mathrm{cor}}
\newcommand{\res}{\mathrm{res}}

%%%%%%%%%%%%%%%%%%%%%%%%%%%%%%%%%%%%%%%%%%%%%%%%%%%%%%%%%%%%%%%%%%%%%%%%%
% ---- SHA ----
\DeclareFontEncoding{OT2}{}{} % to enable usage of cyrillic fonts
  \newcommand{\textcyr}[1]{%
    {\fontencoding{OT2}\fontfamily{wncyr}\fontseries{m}\fontshape{n}%
     \selectfont #1}}
\newcommand{\sha}{{\mbox{\textcyr{Sh}}}}

%\titleformat*{\section}{\Large\bfseries}
%\titleformat*{\subsection}{\Large\bfseries}

\begin{document}
\tolerance 400 \pretolerance 200 \selectlanguage{english}

\title[Embeddings of maximal tori]{Embeddings of maximal tori in classical groups, odd degree descent and Hasse principles}
\author{Eva  Bayer-Fluckiger, Tingyu Lee and Raman Parimala}
\date{\today}
\maketitle

\begin{abstract} 
The aim of this paper is to revisit the question of local-global principles for embeddings of \'etale algebras with involution into central simple algebras with involution over global fields of characteristic not 2. A necessary and sufficient condition is given in  \cite {BLP1}. In the present paper, we give a simpler description of the obstruction group. It is also shown that if the etale algebra is a product of pairwise linearly disjoint field extensions, then the Hasse principle holds, and that if an embedding exists after an odd degree extension, then it also exists
over the global field itself. An appendix gives a generalization of this later result, in the framework of
a question of Burt Totaro.

\medskip

%\noindent {\em MSC 2000:} 11G35, 12G05,14G05, 14G25.
\end{abstract}

\noindent
{ \it Key words:  embedding maximal tori, Hasse principle, algebras with involution. }

\small{} \normalsize

\medskip

\selectlanguage{english}
\section{Introduction}

The aim of this paper is to revisit a topic investigated in \cite {PR}, \cite {GR}, \cite {Lee}, \cite {BLP3}, \cite{BLP2},  \cite {BLP1}, namely the question of embeddings
of maximal tori in classical groups. As in the above references, this is reformulated in terms of embeddings of
commutative \'etale algebras with involution in central simple algebras with involution.

\medskip
If the ground field is a global field of characteristic $\not = 2$, a necessary and sufficient condition for the local-global
principle to hold is given in \cite {BLP1}, Theorem 4.6.1. This result is formulated in terms of an obstruction group, constructed in
\cite {BLP1}, \S 2. The description of this group is however quite complicated. One of
the aims of the present paper is to give a much simpler description of the obstruction group  (see \S \ref{obstruction groups}), leading to
a simpler version of
of Theorem 4.6.1 of \cite {BLP1}, see
Theorem \ref {main}.

\medskip
 On the other hand, \cite{BLP2} gives a necessary and sufficient criterion for an embedding to
exist locally everywhere. However, this result is not enough for some applications of Theorem 4.6.1 of \cite {BLP1} : we
need to know when an {\it oriented} embedding exists locally everywhere.  We provide a criterion for this to hold in Theorem \ref{oriented criterion}.

\medskip
We then give two applications :

\medskip
We show that if the commutative \'etale algebra is a product of pairwise
linearly disjoint fields,
then the obstruction group is trivial, and hence the local-global principle holds
(see Corollary \ref {pairwise}).

\medskip
We also prove an odd degree descent result : if an embedding exists after a finite extension of odd degree, then it also exists
over the ground field (see Theorem \ref{odd descent}).  This provides a positive answer to the following
question of B. Totaro (see \cite {T04}) : if a homogeneous space has a zero-cycle of degree one, does it also have a rational point ?
A  more general result on a positive answer to  Totaro's question for homogeneous spaces under connected linear algebraic groups with connected stabilizers over number  fields,  using Borovoi's arguments, is proved in  Proposition 8.1 of the appendix.

\medskip
We thank Jean-Louis Colliot-Th\'el\`ene, who suggested to us the outline of the proof of Proposition 8.1, and also Mikhail Borovoi for useful suggestions.
The third author is  partially supported by the grant
NSF DMS-1801951 during the preparation of this paper.

\bigskip
\section {Definitions, notation and basic facts} \label{definitions}

\medskip
Let $L$ be a field with ${\rm char}(L) \not = 2$, and let $K$ be a subfield of $L$ such that either $K = L$, or
$L$ is a quadratic extension of $K$.

\medskip
{\bf \'Etale algebras with involution}

\medskip
Let $E$ be a commutative \'etale algebra of finite rank over $L$, and let $\sigma: E \to E$ be a $K$--linear  involution. Set  $F = \{e \in E | \sigma(e) = e \}$, and $n = {\rm dim}_L(E)$. Assume that
if $L = K$, then we have ${\rm dim}_K(F) = [{{n+1}\over 2}] $. Note that if $L \not = K$, then  ${\rm dim}_K(F) = n$ (cf. \cite{PR},
Proposition 2.1.).

\medskip

{\bf Central simple algebras with involution}

\medskip

Let  $A$ be a central simple algebra  over $L$. Let $\tau$ be an involution of $A$, and assume that  $K$ is the fixed field of $\tau$ in $L$. Recall that
$\tau$ is said to be of the {\it first kind} if $K = L$ and of the {\it second kind} if $K \not = L$; in this case, $L$ is a quadratic extension of $K$.
After extension to a splitting field of $A$, any involution of the first kind is induced by a symmetric or by a skew-symmetric form. We say that the involution is of the {\it orthogonal type} in the first case, and of the {\it symplectic type} in the second case. An involution of the second kind is also called {\it unitary involution}.

\medskip
{\bf Embeddings of algebras with involution}

\medskip
Let $(E,\sigma)$ and $(A,\tau)$ be as above, with $n = {\rm dim}_L(E)$ and ${\rm dim}_L(A) = n^2$; assume
moreover that  $\sigma | L = \tau | L$.

\medskip

An {\it embedding} of $(E,\sigma)$ in $(A,\tau)$ is by definition an injective homomorphism
$f : E \to A$ such that $\tau (f(e)) = f(\sigma(e))$ for all $e \in E$. It is well--known that embeddings
of maximal tori into classical groups can be described in terms of embeddings of \'etale algebras with involution
into central simple algebras with involution satisfying the above dimension hypothesis (see for
instance \cite{PR}, Proposition 2.3).

\medskip
Let $\epsilon : E \to A$ be an $L$-embedding which may not respect the given involutions. There exists an
involution $\theta$ of $A$ of the same type (orthogonal, symplectic or unitary) as $\tau$ such that
%\begin{prop} There exists a $\tau$--symmetric element $\alpha  \in A^{\times}$ such that for
%$$\theta =
% \tau \circ {\rm Int}(\alpha)
% $$ we have
 $\epsilon (\sigma(e)) = \theta (\epsilon (e)) \ {\rm for \ all} \   e \in E,$
in other words $\epsilon : (E,\sigma) \to (A,\theta)$ is an $L$-embedding of algebras with involution
%\end{prop}
(see \cite{K}, \S2.5. or \cite{PR}, Proposition 3.1).

\medskip
For all $a \in F^{\times}$, let $\theta_a : A \to A$ be the involution given by $\theta_a =  \theta  \circ {\rm Int}(\epsilon (a))$. Note that
$\epsilon : (E,\sigma) \to (A,\theta_a)$ is an embedding of algebras with involution.

\begin{prop}\label{embedding}
The following conditions are equivalent :

\smallskip
{\rm (a)} There exists an $L$--embedding  $\iota : (E,\sigma) \to (A,\tau)$ of algebras with involution.

\smallskip
{\rm (b)} There exists an $a \in F^{\times}$ such that $(A,\theta_a) \simeq (A,\tau)$ as algebras with involution.

\end{prop}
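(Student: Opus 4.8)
The plan is to prove the two implications separately; $(b)\Rightarrow(a)$ is immediate, and the substance lies in $(a)\Rightarrow(b)$. For $(b)\Rightarrow(a)$: if $\phi\colon(A,\theta_a)\xrightarrow{\sim}(A,\tau)$ is an isomorphism of $L$-algebras with involution, set $\iota=\phi\circ\epsilon$. Since $\epsilon\colon(E,\sigma)\to(A,\theta_a)$ is an embedding of algebras with involution, $\iota(\sigma(e))=\phi(\theta_a(\epsilon(e)))=\tau(\phi(\epsilon(e)))=\tau(\iota(e))$, so $\iota$ is the required $L$-embedding $(E,\sigma)\to(A,\tau)$.

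For $(a)\Rightarrow(b)$, suppose $\iota\colon(E,\sigma)\to(A,\tau)$ is an embedding of algebras with involution. The first step is a Skolem--Noether type fact: $\iota$ and $\epsilon$ are two $L$-embeddings of the commutative \'etale algebra $E$, whose $L$-dimension $n$ equals the degree of $A$, and any two such embeddings are conjugate under $A^{\times}$ (cf.\ \cite{PR}); thus $\iota=\mathrm{Int}(g)\circ\epsilon$ for some $g\in A^{\times}$. Put $\tau'=\mathrm{Int}(g)^{-1}\circ\tau\circ\mathrm{Int}(g)$. Then $\mathrm{Int}(g)$ realizes an isomorphism $(A,\tau')\xrightarrow{\sim}(A,\tau)$ of algebras with involution, and, using $\epsilon(e)=g^{-1}\iota(e)g$ and $\tau\circ\iota=\iota\circ\sigma$, a direct computation gives $\tau'(\epsilon(e))=\mathrm{Int}(g)^{-1}\bigl(\tau(\iota(e))\bigr)=\mathrm{Int}(g)^{-1}\bigl(\iota(\sigma(e))\bigr)=\epsilon(\sigma(e))$; hence $\epsilon\colon(E,\sigma)\to(A,\tau')$ is also an embedding of algebras with involution.

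Now $\epsilon$ intertwines $\sigma$ with both $\theta$ and $\tau'$, so $\theta\circ\tau'$ is an $L$-algebra automorphism of $A$ fixing $\epsilon(E)$ pointwise. By Skolem--Noether it equals $\mathrm{Int}(u)$ for some $u$ in the centralizer $C_A(\epsilon(E))$, which is $\epsilon(E)$ itself since $\epsilon(E)$ is a maximal commutative subalgebra; so $u=\epsilon(b)$ with $b\in E^{\times}$ and $\tau'=\theta\circ\mathrm{Int}(\epsilon(b))$. As $\tau'$ is an involution, $\epsilon(b)^{-1}\theta(\epsilon(b))$ is central, i.e.\ $\theta(\epsilon(b))=\lambda\epsilon(b)$ for some $\lambda\in L^{\times}$; since $\theta(\epsilon(b))=\epsilon(\sigma(b))$ this reads $\sigma(b)=\lambda b$, and applying $\sigma$ once more gives $\sigma(\lambda)\lambda=1$.

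It remains to replace $b$ by an element of $F^{\times}$ defining the same involution. If $\tau$ is of the first kind then $\lambda=\pm1$; the case $\lambda=-1$ would make $\epsilon(b)$ a $\theta$-antisymmetric unit, so $\tau'=\theta\circ\mathrm{Int}(\epsilon(b))$ would be of type opposite to that of $\theta$, contradicting $(A,\tau')\simeq(A,\tau)$ and the hypothesis that $\tau$ and $\theta$ have the same type; hence $\lambda=1$, $b\in F^{\times}$, and $(A,\theta_b)=(A,\tau')\simeq(A,\tau)$. If $\tau$ is of the second kind, Hilbert's Theorem 90 provides $\nu\in L^{\times}$ with $\lambda=\sigma(\nu)\nu^{-1}$; then $a:=\nu^{-1}b$ satisfies $\sigma(a)=a$, so $a\in F^{\times}$, and since $\nu^{-1}$ is central $\mathrm{Int}(\epsilon(a))=\mathrm{Int}(\epsilon(b))$, whence $(A,\theta_a)=(A,\tau')\simeq(A,\tau)$. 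The main obstacle is the conjugacy of $\iota$ and $\epsilon$ in the first step: since $E$ is only \'etale and not simple, classical Skolem--Noether does not apply verbatim, and one has to use that the dimension hypothesis $\dim_L E=\deg A$ forces $\iota$ and $\epsilon$ to send each primitive idempotent of $E$ to idempotents of $A$ of the same rank, which is what makes the two embeddings conjugate.
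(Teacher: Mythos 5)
Your proof is correct. Note that the paper itself gives no argument here: it simply cites \cite{PR}, Theorem 3.2, and what you have written is essentially the standard proof of that result (conjugate the two embeddings by Skolem--Noether, observe that the transported involution differs from $\theta$ by $\mathrm{Int}(\epsilon(b))$ for some $b\in E^{\times}$ because $\epsilon(E)$ is self-centralizing, and then adjust $b$ to lie in $F^{\times}$ using the type comparison in the first kind and Hilbert 90 in the second kind). You also correctly identify the one genuinely delicate point, namely that Skolem--Noether does not apply verbatim to the \'etale algebra $E$ and that one must first match the reduced ranks of the images of the primitive idempotents, which the dimension count $\sum_i n_i = \deg A$ forces.
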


\noindent
{\bf Proof.} See \cite{PR} Theorem 3.2.

\medskip
{\bf Oriented embeddings}

\medskip Let$(A,\tau)$ be an algebra with orthogonal involution.
The algebra $A$ is said to be {\it split} if $A$ is a matrix algebra.
Note that if $n$ is odd, then $A$ is split.
In the case of a non-split $A$, we need an additional notion, called {\it orientation} (see \cite {BLP1}, \S 2).

\medskip
Assume that $(A,\tau)$ is of orthogonal type and that $n$ is even. We need an additional notion, called an \emph{oriented embedding} (see \cite{BLP1}, \S 2.6). We redefine this notion in terms of fixing a module structure for the Clifford algebra $\cC(A,\tau)$ over a given etale quadratic algebra. We identify this notion with the one given in \cite{BLP1}.
We denote by $Z(A,\tau)$ the center of the algebra $\cC(A,\tau)$. Then
$Z(A,\tau)$ is a quadratic \'etale algebra over $K$ (cf. \cite{KMRT} Chap II.  (8.7)).

\medskip
Let $\Delta (E)$ be the discriminant algebra of $E$ (cf. \cite {KMRT} Chapter V, \S 18,
p. 290). An isomorphism of $K$--algebras $$\Delta(E) \to Z(A,\tau)$$ will be called an {\it orientation} (see
\cite {BLP1}, \S 2).

\medskip
Let $\pi$ be the generalized Pfaffian of $(A,\tau)$ (see \cite{KMRT}Chap. II (8.22)).
For an embedding $\iota:(E,\sigma)\to(A,\tau)$, we denote by
$u_{\iota}:\Delta(E)\to Z(A,\tau)$ the isomorphism induced by $\iota$ via the generalized Pfaffian $\pi$ (cf. \cite{BLP1} 2.3).

\begin{defn}\label{new orientation def}
Fix an orientation $\nu:\Delta(E)\to Z(A,\tau)$.
An embedding $\iota:(E,\sigma)\to(A,\tau)$ is said to be an oriented embedding with respect to $\nu$ if $u_{\iota}=\nu$.
\end{defn}

Let $u: \Delta(E) \to Z(A,\theta)$ be the orientation of $(A,\theta)$ constructed via
$\epsilon:(E,\sigma)\to (A,\theta)$ and via the generalized Pfaffian of $(A,\theta)$.
For all $a \in F^{\times}$ let $u_a : \Delta(E) \to Z(A,\theta_a)$ be the isomorphism induced by $\epsilon:(E,\sigma)\to (A,\theta_a)$ and by the generalized Pfaffian of $(A,\theta_a)$.
Note that the orientation $u_a$ defined above coincides with the one defined in \cite {BLP1} 2.5.

\begin{prop}
Let $(A,\tau)$ be an orthogonal involution with $A$ of even degree, and let $\nu : \Delta (E) \to Z(A,\tau)$ be an orientation. Let $ \iota : (E, \sigma) \to (A, \tau)$ be an oriented embedding with respect to $\nu$.
Then there  exist $a \in F^{\times}$ and $\alpha \in A^{\times}$  satisfying the following conditions :

\medskip

{\rm (a)} ${\rm Int}(\alpha) : (A,\theta_{a}) \to (A,\tau)$   is an isomorphism of algebras with involution such that ${\rm Int}(\alpha)  \circ  \epsilon = \iota$.

\medskip

{\rm (b)} The induced automorphism  $c(\alpha) :  Z(A,\theta_{a})  \to Z(A,\tau)$ satisfies $$c(\alpha) \circ   u_{a} = \nu.$$

The elements $ (\iota, a, \alpha,\nu)$ are called {\it parameters} of the oriented embedding.
\end{prop}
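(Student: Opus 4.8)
The plan is to upgrade Proposition~\ref{embedding} (the version without orientation) by keeping track of the Clifford-center data throughout. Start from the given oriented embedding $\iota : (E,\sigma)\to(A,\tau)$. By Proposition~\ref{embedding} applied to $\iota$ (which in particular is an embedding of algebras with involution, forgetting the orientation), there exist $a\in F^\times$ and $\alpha\in A^\times$ such that $\mathrm{Int}(\alpha):(A,\theta_a)\to(A,\tau)$ is an isomorphism of algebras with involution with $\mathrm{Int}(\alpha)\circ\epsilon=\iota$; this is exactly (a). The content of the proposition is then (b): that $\alpha$ can moreover be chosen so that the induced map $c(\alpha)$ on Clifford centers carries the orientation $u_a$ to $\nu$.

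For (b), first recall how $c(\alpha)$ arises: an isomorphism $\mathrm{Int}(\alpha):(A,\theta_a)\to(A,\tau)$ of orthogonal algebras with involution induces, functorially, an isomorphism $\cC(A,\theta_a)\to\cC(A,\tau)$ of Clifford algebras, hence an isomorphism $c(\alpha):Z(A,\theta_a)\to Z(A,\tau)$ of the quadratic \'etale centers over $K$. The orientations $u_a$ and $\nu$ are both isomorphisms from the fixed algebra $\Delta(E)$ to (respectively) $Z(A,\theta_a)$ and $Z(A,\tau)$, and functoriality of the generalized Pfaffian construction with respect to $\iota=\mathrm{Int}(\alpha)\circ\epsilon$ gives a commuting relation tying $u_{\iota}$, $u_a$ and $c(\alpha)$ together — concretely $u_{\iota}=c(\alpha)\circ u_a$. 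Since $\iota$ is oriented with respect to $\nu$, Definition~\ref{new orientation def} says $u_{\iota}=\nu$, so $c(\alpha)\circ u_a=\nu$, which is (b). The crux is therefore establishing the compatibility $u_{\iota}=c(\alpha)\circ u_a$, which should follow from the naturality of the Pfaffian/Clifford-center construction under isomorphisms of algebras with involution composed with the $L$-embedding $\epsilon$; this is implicit in \cite{BLP1} 2.3--2.5 and in \cite{KMRT} Chap.~II, and is essentially a diagram chase once the relevant functorialities are spelled out.

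The main obstacle I expect is purely bookkeeping: making the functoriality statement precise. One must check that $\cC(-)$ and the map $Z(-)\to\Delta(E)$ (via $\pi$ and the embedding) are functorial in the appropriate $2$-categorical sense — i.e. that replacing $\theta$ by $\theta_a$ (an inner twist by $\epsilon(a)$ with $a\in F^\times$) does not disturb the identification of the Clifford center with a form of $\Delta(E)$, and that conjugation by $\alpha$ acts on $Z$ exactly by $c(\alpha)$ as defined. The subtlety is that $Z(A,\theta_a)$ depends on $a$, so one has to verify that $u_a$ was defined (in \cite{BLP1} 2.5, as noted just before the statement) precisely so that this compatibility holds; given that remark, the argument reduces to invoking it. A secondary point is that $\alpha$ in (a) is only determined up to left multiplication by $A^{\times}$-elements commuting with the image, i.e. up to $\epsilon(E)^\times$ after the identification, but since $c$ is a homomorphism and elements of $\epsilon(E)^\times$ induce the identity (or at worst a controlled map) on $Z$, the choice of $\alpha$ producing (a) already produces (b), with no further adjustment needed.
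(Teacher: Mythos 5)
Your argument is correct and is essentially identical to the paper's own proof: apply Proposition~\ref{embedding} to obtain $a$ and $\alpha$ with $\mathrm{Int}(\alpha)\circ\epsilon=\iota$, then use the functorial compatibility $c(\alpha)\circ u_a=u_{\iota}$ together with $u_{\iota}=\nu$ from Definition~\ref{new orientation def}. The paper states this in three lines; your extra discussion of the Pfaffian functoriality just makes explicit what the paper leaves implicit.
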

\begin{proof}
By Proposition \ref{embedding} there are $a\in F^{\times}$ and $\alpha\in A^{\times}$ such that
$\Int(\alpha)\circ\epsilon=\iota$.
Consider the induced map between $\cC(A,\theta_a)$ and $\cC(A,\tau)$.
Then we have $c(\alpha)\circ u_a=u_{\iota}=\nu.$
\end{proof}

\begin{lemma}\label{new criterion}
Let $(A,\tau)$ be an orthogonal involution with $A$ of even degree, and let $\nu : \Delta (E) \to Z(A,\tau)$ be an orientation. For $a\in F^\times$, regard $\cC(A,\theta_a)$ and $\cC(A,\tau)$ as $\Delta(E)$-modules via $u_a$ and $\nu$ respectively.
Then there exists an oriented embedding of $(E,\sigma)$ into $(A,\tau)$ with respect to the orientation $\nu$ if and only if there exist $a\in F^{\times}$ and $\alpha \in A^{\times}$
such that ${\Int}(\alpha) : (A,\theta_{a}) \to (A,\tau)$  is an isomorphism of algebras with involution and
the induced isomorphism on the Clifford
algebras $c(\alpha) : \cC(A,\theta_{a}) \to \cC(A,\tau)$ is an isomorphism of $\Delta(E)$-modules.

\end{lemma}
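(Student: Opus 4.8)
The plan is to derive the lemma from the preceding Proposition, the point being that the condition $c(\alpha)\circ u_{a}=\nu$ occurring there is just a reformulation, in terms of module structures, of ``$c(\alpha)$ is an isomorphism of $\Delta(E)$-modules''. I would first record the following elementary sublemma. Let $\alpha\in A^{\times}$ be such that $\Int(\alpha):(A,\theta_{a})\to(A,\tau)$ is an isomorphism of algebras with involution, so that $c(\alpha):\cC(A,\theta_{a})\to\cC(A,\tau)$ is a ring isomorphism restricting to an isomorphism $Z(A,\theta_{a})\to Z(A,\tau)$ of centers. Regard $\cC(A,\theta_{a})$ as a $\Delta(E)$-module via $u_{a}$ and $\cC(A,\tau)$ as a $\Delta(E)$-module via $\nu$; thus the actions are $\lambda\cdot x=u_{a}(\lambda)x$ and $\lambda\cdot y=\nu(\lambda)y$, multiplication by central elements. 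Then $c(\alpha)$ is a homomorphism of $\Delta(E)$-modules if and only if $c(\alpha)\circ u_{a}=\nu$: the module condition reads $c(\alpha)(u_{a}(\lambda))c(\alpha)(x)=\nu(\lambda)c(\alpha)(x)$ for all $\lambda,x$ (using that $c(\alpha)$ is a ring map), and taking $x=1$ forces $c(\alpha)(u_{a}(\lambda))=\nu(\lambda)$, while conversely $c(\alpha)\circ u_{a}=\nu$ trivially gives the module condition. Being a bijective module homomorphism, $c(\alpha)$ is then a module isomorphism.

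Granting this, the forward implication is immediate. If $\iota:(E,\sigma)\to(A,\tau)$ is an oriented embedding with respect to $\nu$, the preceding Proposition furnishes $a\in F^{\times}$ and $\alpha\in A^{\times}$ such that $\Int(\alpha):(A,\theta_{a})\to(A,\tau)$ is an isomorphism of algebras with involution and $c(\alpha)\circ u_{a}=\nu$; by the sublemma this last equality says exactly that $c(\alpha):\cC(A,\theta_{a})\to\cC(A,\tau)$ is an isomorphism of $\Delta(E)$-modules.

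For the converse, suppose $a\in F^{\times}$ and $\alpha\in A^{\times}$ are such that $\Int(\alpha):(A,\theta_{a})\to(A,\tau)$ is an isomorphism of algebras with involution and $c(\alpha)$ is an isomorphism of $\Delta(E)$-modules. Put $\iota=\Int(\alpha)\circ\epsilon:E\to A$. Since $\epsilon:(E,\sigma)\to(A,\theta_{a})$ is an embedding of algebras with involution and $\Int(\alpha)$ is an isomorphism of algebras with involution, $\iota:(E,\sigma)\to(A,\tau)$ is an embedding of algebras with involution. It remains to verify $u_{\iota}=\nu$, i.e. that $\iota$ is oriented with respect to $\nu$. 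Here I would invoke the compatibility of the Clifford algebra and the generalized Pfaffian with isomorphisms of algebras with involution — precisely the identity $u_{\iota}=c(\alpha)\circ u_{a}$ already used in the proof of the preceding Proposition — and then apply the sublemma to the hypothesis that $c(\alpha)$ is a $\Delta(E)$-module isomorphism, which gives $c(\alpha)\circ u_{a}=\nu$. Combining the two yields $u_{\iota}=\nu$, as required.

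The only non-formal ingredient is the functoriality identity $u_{\iota}=c(\alpha)\circ u_{a}$ for $\iota=\Int(\alpha)\circ\epsilon$; but this is exactly the naturality of the Clifford algebra together with the generalized Pfaffian under isomorphisms of algebras with involution, which already underlies the preceding Proposition, so it requires no new argument. Everything else is bookkeeping with the central module structures, so I do not expect a genuine obstacle.
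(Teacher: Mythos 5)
Your proof is correct and follows essentially the same route as the paper's: both directions reduce to the identity $u_{\iota}=c(\alpha)\circ u_{a}$ from the preceding Proposition together with the observation that $c(\alpha)$ being a $\Delta(E)$-module isomorphism is equivalent to $c(\alpha)\circ u_{a}=\nu$. You merely spell out this last equivalence (the ``take $x=1$'' sublemma) more explicitly than the paper, which treats it as immediate.
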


\noindent
{\bf Proof.}
Suppose that there is an oriented embedding $\iota$ of $(E,\sigma)$ into $(A,\tau)$ with respect to the orientation $\nu$.
By definition there exist parameters $(\iota,a,\alpha,\nu)$ of the oriented embedding $\iota$.
Consider the induced isomorphism $c(\alpha):\cC(A,\theta_{a}) \to \cC(A,\tau)$.
Since $c(\alpha)\circ u_a=\nu$, the algebras
$\cC(A,\theta_{a})$ and $\cC(A,\tau)$ are isomorphic as
$\Delta(E)$-modules.

Suppose the converse.
As $c(\alpha)$ is an isomorphism of $\Delta(E)$-modules,
we have $c(\alpha)\circ u_a=\nu$.
Therefore $\Int(\alpha)\circ\epsilon$ an oriented embedding with respect to $\nu$.

\medskip

\medskip

\begin{notation}
If $M$ is a field, let  ${\rm Br}(M)$ be the Brauer group of $M$. For $a, b \in M^{\times}$, we
denote by $(a,b)$ the class of the quaternion algebra determined by $a$ and $b$ in ${\rm Br}(M)$.

We say that $(E,\sigma)$ is {\it split} if $E = F \times F$ and that $\sigma(x,y) = (y,x)$ for all $x,y \in F$.
\end{notation}

\begin{lemma}\label{split} Assume that $(E,\sigma)$ is split.  Then for all $a \in F^{\times}$, the
algebras with involution $(A,\theta)$ and $(A,\theta_a)$ are isomorphic. Moreover, there exists an isomorphism
${\rm Int}(\alpha) : (A,\theta) \to (A,\theta_a)$  such that $c(\alpha) \circ u = u_a$.

\end{lemma}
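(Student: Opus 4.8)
The plan is to exploit the fact that when $(E,\sigma)$ is split, the element $a \in F^{\times}$ is really a pair $a = (a_1, a_2) \in F^{\times}$, and the inner automorphism $\mathrm{Int}(\epsilon(a))$ that relates $\theta$ and $\theta_a$ can be controlled explicitly. First I would recall that a split pair $(E,\sigma) = (F\times F, \mathrm{swap})$ embedded via $\epsilon$ into $A$ produces, through $\epsilon$, a pair of complementary idempotents $e_1 = \epsilon(1,0)$, $e_2 = \epsilon(0,1)$ with $\theta(e_1) = e_2$. Since $\epsilon(a) = a_1 e_1 + a_2 e_2$ is a unit, one checks that $\theta_a = \theta \circ \mathrm{Int}(\epsilon(a))$ and $\theta$ differ by an inner automorphism that is itself of the form $\mathrm{Int}(\alpha)$ for a suitable $\alpha$ built from $\epsilon(a)$; concretely, because $\theta(\epsilon(a)) = \epsilon(\sigma(a)) = \epsilon(a)$ (as $\sigma(a_1,a_2) = (a_2,a_1)$ need not fix $a$ — so instead one uses that $\theta(\epsilon(a))\epsilon(a)^{-1}$ is $\theta$-symmetric up to the standard cocycle computation), one produces $\alpha$ with $\mathrm{Int}(\alpha)\circ\theta\circ\mathrm{Int}(\alpha)^{-1} = \theta_a$, i.e. $\mathrm{Int}(\alpha): (A,\theta)\to(A,\theta_a)$ is an isomorphism of algebras with involution. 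This is essentially the standard observation that a split étale subalgebra with involution imposes no extra constraint, so $\theta$ and $\theta_a$ lie in the same isomorphism class.

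Next I would track the effect on the Clifford algebra. The isomorphism $\mathrm{Int}(\alpha)$ induces $c(\alpha): \cC(A,\theta) \to \cC(A,\theta_a)$, and by functoriality of the generalized Pfaffian (the same functoriality used in the proof of the preceding Proposition, where $c(\alpha)\circ u_a = u_\iota$) one gets that $c(\alpha)$ carries the orientation $u$ attached to $\epsilon: (E,\sigma)\to(A,\theta)$ to some orientation of $(A,\theta_a)$ arising from a composite embedding. The point is that the composite $\mathrm{Int}(\alpha)\circ\epsilon: (E,\sigma)\to(A,\theta_a)$ is again induced by $\epsilon$ up to the inner twist, and since the generalized Pfaffian construction of $u_a$ uses precisely $\epsilon: (E,\sigma)\to(A,\theta_a)$, one concludes $c(\alpha)\circ u = u_a$. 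Here I would invoke Definition \ref{new orientation def} and the compatibility remark stated just before the preceding Proposition, namely that $u_a$ coincides with the orientation of \cite{BLP1} 2.5 constructed via $\epsilon$ and the Pfaffian of $(A,\theta_a)$.

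The main obstacle is the second assertion: not just that $(A,\theta)\simeq(A,\theta_a)$, but that one can choose the isomorphism to be \emph{orientation-compatible} in the sense $c(\alpha)\circ u = u_a$. A priori, two choices of $\alpha$ differ by an element of the centralizer of $\epsilon(E)$, and such an element may act nontrivially on $Z(A,\theta)$, swapping the two components of the quadratic étale algebra; one must verify that the particular $\alpha$ coming from the split structure (built multiplicatively from the idempotents $e_1, e_2$ and $a_1, a_2$) acts trivially on the center of the Clifford algebra, equivalently that its spinor-norm-type invariant vanishes. I would handle this by a direct computation in the split model $A = \mathrm{M}_n(L)$ (reducing to this case is legitimate since all the relevant data — embedding, involution, Clifford algebra center — behave well under the passage, and isomorphism of algebras with involution over $L$ can be checked after any faithfully flat extension where $A$ splits), writing $\epsilon$ in block form adapted to $e_1, e_2$, and checking that the conjugating element lies in the kernel of the map $A^\times \to \mathrm{Aut}(Z(A,\theta))$. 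This kernel computation is the crux; everything else is bookkeeping with the Pfaffian functoriality already established above.
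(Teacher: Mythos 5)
There is a genuine gap. The engine of the paper's proof is a single fact that your proposal never isolates: when $(E,\sigma)$ is split, the norm map $\mathrm{N}_{E/F}\colon E^{\times}\to F^{\times}$, $x\mapsto x\sigma(x)$, is \emph{surjective}. Choosing $x\in E^{\times}$ with $\mathrm{N}_{E/F}(x)=a$ and setting $\alpha=\epsilon(x^{-1})$ settles both assertions at once: since $\alpha\,\theta(\alpha)=\epsilon\bigl((x\sigma(x))^{-1}\bigr)=\epsilon(a)^{-1}$ and $\theta_a=\mathrm{Int}(\epsilon(a)^{-1})\circ\theta$, the map $\mathrm{Int}(\alpha)\colon (A,\theta)\to(A,\theta_a)$ is an isomorphism of algebras with involution; and since $\alpha$ lies in $\epsilon(E)$, it commutes with $\epsilon(E)$, so $\mathrm{Int}(\alpha)\circ\epsilon=\epsilon$, and the identity $c(\alpha)\circ u=u_{a}$ is then immediate from the way $u$ and $u_a$ are both defined via $\epsilon$ and the generalized Pfaffian (it is the compatibility $c(\alpha)\circ u = u_{\iota}$ of the preceding Proposition applied with $\iota=\epsilon$). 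Your proposal gestures at ``building $\alpha$ multiplicatively from the idempotents and $a_1,a_2$'' but never writes it down, and the passage discussing $\theta(\epsilon(a))$ is internally inconsistent (note that $a\in F^{\times}$ \emph{is} fixed by $\sigma$, being in the fixed algebra, so $\theta(\epsilon(a))=\epsilon(a)$ with no caveat needed); what you call ``the standard cocycle computation'' is exactly the step that requires exhibiting $x$ with $x\sigma(x)=a$, which exists precisely because $E=F\times F$.

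The second half of your plan is also problematic. You defer the orientation compatibility to a ``kernel computation'' in the split model $A=\mathrm{M}_n(L)$ and declare it the crux without performing it; with the correct choice of $\alpha$ this issue evaporates, because $\alpha$ centralizes $\epsilon(E)$ and the claim $c(\alpha)\circ u=u_a$ is definitional rather than a spinor-norm condition to be verified. Moreover, the justification you offer for reducing to the split model --- that isomorphism of algebras with involution over $L$ can be checked after a faithfully flat extension splitting $A$ --- is false: two involutions on $A$ always become isomorphic over a splitting field of the right sort without being isomorphic over $L$ (already for quadratic forms). So as written the argument both omits the decisive construction and rests a key reduction on an invalid descent principle.
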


\noindent
{\bf Proof.} Let $a \in F^{\times}$. Since $E = F \times F$, there exists $x \in E^{\times}$ such that ${\rm N}_{E/F}(x) = a$. Hence
${\rm Int}(\epsilon(x^{-1})) : A \to A$ is an isomorphism between the algebras with involution $(A,\theta)$ and $(A,\theta_a)$, and $c(\epsilon(x^{-1})) \circ u = u_a$.

\begin{prop}\label{both orientations} Assume that $K$ is a local field, that $(E,\sigma)$ is nonsplit, and that $(A,\tau)$ is isomorphic to
$({\rm M}_{2n}(H),\tau_h)$, where $H$ is the unique quaternion field over $K$, and $\tau_h$ is induced by the hyperbolic form on $H^{2n}$. Suppose
that there exists an embedding $(E,\sigma) \to (A,\tau)$. Let $\nu : \Delta (E) \to Z(A,\tau)$ be an orientation. Then there exists
an oriented embedding of $(E,\sigma)$ into $(A,\tau)$ with respect to the orientation $\nu$.

\end{prop}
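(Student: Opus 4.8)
By hypothesis there is an embedding $\iota_{0}\colon(E,\sigma)\to(A,\tau)$ of algebras with involution; it determines an isomorphism $u_{\iota_{0}}\colon\Delta(E)\xrightarrow{\sim}Z(A,\tau)$ of quadratic \'etale $K$-algebras. In particular $\Delta(E)\cong Z(A,\tau)$, so there are exactly two orientations of $(A,\tau)$ relative to $E$, namely $u_{\iota_{0}}$ and $s\circ u_{\iota_{0}}$, where $s$ denotes the nontrivial $K$-automorphism of $Z(A,\tau)$. If $\nu=u_{\iota_{0}}$ then $\iota_{0}$ is itself an oriented embedding with respect to $\nu$ by Definition \ref{new orientation def}, and there is nothing to prove; so I assume $\nu=s\circ u_{\iota_{0}}$. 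It then suffices to produce one embedding $\iota\colon(E,\sigma)\to(A,\tau)$ with $u_{\iota}=s\circ u_{\iota_{0}}$.

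Such an $\iota$ cannot be obtained by composing $\iota_{0}$ with a similitude of $(A,\tau)$. Indeed, since $\tau$ is hyperbolic and $A=\mathrm{M}_{2n}(H)$ is non-split, one has $\cC(A,\tau)\simeq \mathrm{M}_{2^{2n-1}}(K)\times C'$ with $C'$ Brauer-equivalent to $A$; the two factors are non-isomorphic $K$-algebras, so no automorphism of $\cC(A,\tau)$ exchanges its central idempotents, i.e. $\mathrm{GO}(A,\tau)=\mathrm{GO}^{+}(A,\tau)$, and post-composing $\iota_{0}$ with a similitude never changes the induced orientation. The flip must therefore come from the parameter $a$. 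Using the parameters $(\iota_{0},a_{0},\alpha_{0},u_{\iota_{0}})$ furnished by the Proposition preceding Lemma \ref{new criterion}, so that $\Int(\alpha_{0})\colon(A,\theta_{a_{0}})\to(A,\tau)$ is an isomorphism of algebras with involution and $c(\alpha_{0})\circ u_{a_{0}}=u_{\iota_{0}}$, the plan is to find $b\in F^{\times}$ such that $(A,\theta_{a_{0}b})\simeq(A,\tau)$ and such that the orientation $u_{a_{0}b}$, transported to $Z(A,\tau)$ along any such isomorphism, becomes $s\circ u_{\iota_{0}}$; Lemma \ref{new criterion} applied with $a=a_{0}b$ then produces the sought oriented embedding. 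Note here that $\epsilon(b)$ is $\theta_{a_{0}}$-symmetric, so $\theta_{a_{0}b}$ is again an orthogonal involution, and twisting the hyperbolic involution $\theta_{a_{0}}$ by $\epsilon(b)$ keeps it in its isomorphism class for $b$ in a suitable nontrivial subset of $F^{\times}$.

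The decisive point — which I expect to be the main obstacle — is to describe explicitly how $u_{a}$ depends on $a$ and to show that the element of $\Aut(Z(A,\tau))=\{\pm1\}$ by which $u_{a_{0}b}$ differs from $u_{a_{0}}$ can be made nontrivial by an admissible choice of $b$. I would carry this out by writing that element as the value at $b$ of an explicit character $F^{\times}\to\{\pm1\}$ built from $(E,\sigma)$ (of discriminant/norm-residue type), and then arguing that, \emph{because $(E,\sigma)$ is nonsplit}, this character is not identically trivial on the set of $b$ for which $(A,\theta_{a_{0}b})\simeq(A,\tau)$, using that $K$ is local to guarantee this set is large enough for the character to be nonconstant on it. Picking such a $b$ together with $\alpha$ satisfying $\Int(\alpha)\colon(A,\theta_{a_{0}b})\to(A,\tau)$ an isomorphism of algebras with involution and $c(\alpha)\circ u_{a_{0}b}=s\circ u_{\iota_{0}}=\nu$, Lemma \ref{new criterion} yields an oriented embedding of $(E,\sigma)$ into $(A,\tau)$ with respect to $\nu$. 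Together with the case $\nu=u_{\iota_{0}}$ treated above, this completes the proof.
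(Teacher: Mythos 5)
Your strategy coincides with the paper's: reduce to the case $\nu\neq u_{\iota_0}$, note that there are exactly two isomorphisms $\Delta(E)\to Z(A,\tau)$ and that (because $A$ is non-split and $\tau$ hyperbolic) an isomorphism of algebras with involution cannot interchange the two components of the Clifford algebra, and then realize the orientation flip by twisting the involution by an element of $F^{\times}$. The problem is that the step you yourself label ``the decisive point'' --- an explicit description of how the induced orientation depends on the twisting parameter, together with the existence of an admissible twist realizing the nontrivial value --- is only announced (``I would carry this out by writing that element as the value of an explicit character\dots''), never executed, and it is the entire mathematical content of the proposition. The paper closes exactly this gap by citing \cite{BLP1}, Lemma 2.5.4: writing $E=F(\sqrt d)$, for $a\in F^{\times}$ one has $[\cC(A,\tau_a)]=[\cC(A,\tau)]+{\rm res}_{\Delta(E)/K}\,{\rm cor}_{F/K}(a,d)$ in ${\rm Br}_2(\Delta(E))$, the Clifford algebras being viewed as $\Delta(E)$-modules via the orientations induced by $\iota_a$ and $\iota$. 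Since $(E,\sigma)$ is nonsplit, $d$ is a nonsquare in some factor of $F$, and since $K$ is local one can choose $a$ with ${\rm cor}_{F/K}(a,d)\neq 0$; this is precisely your ``norm-residue character is not identically trivial'' assertion, made precise. Without this formula (or an equivalent computation) your text does not yet prove the statement.

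Two further points you assert without argument are also handled more cleanly in the paper. First, admissibility of the twist: the paper twists $\tau$ itself by $\iota(a)$ rather than passing through $\theta_{a_0}$ and a second parameter $b$, and observes that $\tau_a$ is again hyperbolic, hence $(A,\tau_a)\simeq(A,\tau)$; so the ``suitable nontrivial subset of $F^{\times}$'' on which your character must be nonconstant is all of $F^{\times}$, and no interaction between the two constraints needs to be analyzed. Second, the detection of the flip is indirect: given $\Int(\alpha):(A,\tau_a)\to(A,\tau)$, if $c(\alpha)\circ u_{\iota_a}$ equalled $u_{\iota}$ then $\cC(A,\tau_a)$ and $\cC(A,\tau)$ would have the same class in ${\rm Br}(\Delta(E))$, contradicting the choice of $a$; since only two isomorphisms $\Delta(E)\to Z(A,\tau)$ exist, $\Int(\alpha)\circ\iota_a$ must carry the orientation $\nu$. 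If you incorporate Lemma 2.5.4 and the hyperbolicity of $\tau_a$, your outline becomes the paper's proof.
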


\noindent
{\bf Proof.} Let $\iota : (E,\sigma) \to (A,\tau)$ be an embedding, and let $u_\iota : \Delta(E) \to Z(A,\tau)$ be the isomorphism induced by $\iota$. If $u_\iota = \nu$, we
are done. Suppose that $u_\iota \not = \nu$.
Let $d \in F^{\times}$, with $d \not \in F^{\times 2}$, such that $E= F(\sqrt d)$.
Then there is some $a\in F^{\times}$ such that ${\rm res}_{\Delta(E)/K} {\rm cor}_{F/K} (a,d)\neq 0$ in ${\rm Br}_2(\Delta(E))$.
Let $\iota_a:(E,\sigma)\to (A,\tau_a)$ be the embedding defined by $\iota_a(x)=\iota(x)$ for all $x\in E$.
By \cite {BLP1}, Lemma 2.5.4, we have $[\cC(A,\tau_{a}) ] = [\cC(A,\tau)] + {\rm res}_{\Delta(E)/K} {\rm cor}_{F/K} (a,d)$ in ${\rm Br}_2(\Delta(E))$, where we regard $\cC(A,\tau)$ and $\cC(A,\tau_a)$ as $\Delta(E)$-modules via $u_{\iota}$ and $u_{\iota_a}$ respectively.

Since $\tau$ and $\tau_a$ are both hyperbolic involutions, there is an $\alpha\in A^{\times}$ such that
$\Int (\alpha):(A,\tau_a)\to (A,\tau)$ is an isomorphism of algebras with involution.
By the choice of $a$,  the algebras $\cC(A,\tau_{a})$ and $\cC(A,\tau)$ are not isomorphic as $\Delta(E)$-modules.
Therefore $c(\alpha)\circ u_{\iota_a}\neq u_{\iota}$.
As there are exactly two distinct isomorphisms between $\Delta(E)$ and $Z(A,\tau)$, this implies that $\Int(\alpha) \circ \iota_a$ is an embedding with orientation $\nu$.

\section{Local criteria for the existence of oriented embeddings}

Assume that $K$ is a global field, and let $V_K$ be the set of places of $K$. If $v \in V_K$, we denote by $K_v$ the
completion of $K$ at $v$. Let $(A,\tau)$ and $(E,\sigma)$ be as in \S \ref{definitions}. In \cite{BLP2}, we gave necessary and
sufficient conditions for an embedding $(E,\sigma) \to (A,\tau)$ to exist everywhere locally. The aim of this section is to
give such conditions for an {\it oriented embedding} to exist everywhere locally.
The existence of embeddings implies the existence of oriented embeddings unless $A$ is non-split and $\tau$ is of orthogonal type.
Hence in the rest of this section, we assume that $A$ is non-split and $\tau$ is of orthogonal type.

\medskip
We keep the notation of \S \ref{definitions}; in particular, $\epsilon : E \to A$ is an embedding of algebras with involution $(E,\sigma) \to (A,\theta)$,
and $u : \Delta(E) \to Z(A,\theta)$ is the isomorphism induced by $\epsilon$ as above. For all $v \in V_K$, we denote by $H_v$ the unique quaternion field over $K_v$,
and let $\tau_h$ be the involution of ${\rm M}_{2n}(H_v)$ induced by the hyperbolic hermitian form on $H_v^{2n}$.
Let us denote by $\mathcal P$ the set of $v \in V_K$ such that

\medskip

$\bullet$ $(A,\tau) \otimes_K K_v \simeq ({\rm M}_{2n}(H_v),\tau_h)$;

\medskip
$\bullet$ $E \otimes K_v = (F \otimes_K K_v) \times (F \otimes_K K_v)$, and for all $x,y \in F \otimes _K K_v$ we have $\sigma(x,y) = (y,x)$.

\begin{theo}\label{oriented criterion} Assume that for all $v \in V_K$, there exists an embedding $(E,\sigma)\otimes_K K_v  \to (A,\tau)
\otimes_K K_v$. Let $\nu : \Delta(E) \to Z(A,\tau)$ be an orientation. The following properties are equivalent

\medskip
{\rm (i)} There exists an oriented embedding $(E,\sigma)\otimes_K K_v  \to (A,\tau)
\otimes_K K_v$  with respect to $\nu$ for all $v \in V_K$;

\medskip {\rm (ii)} View $\cC(A \otimes_K K_v,\theta)$ and $\cC(A \otimes_K K_v,\tau)$ as  $\Delta(E)$-modules via $u$ and $\nu$ respectively.
Then $$[\cC(A \otimes_K K_v,\theta) ] = [\cC(A \otimes_K K_v,\tau)] $$

 \noindent in ${\rm Br}(\Delta (E \otimes_K K_v))$ for all $v \in \mathcal P$.

\end{theo}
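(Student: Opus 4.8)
The plan is to prove the equivalence (i)$\Leftrightarrow$(ii) by reducing, place by place, to the criteria established in \S\ref{definitions}. Fix $v \in V_K$; by hypothesis an embedding $(E,\sigma)\otimes_K K_v \to (A,\tau)\otimes_K K_v$ exists, so by Proposition \ref{embedding} applied over $K_v$ there are $a \in (F\otimes_K K_v)^\times$ and $\alpha \in (A\otimes_K K_v)^\times$ with $\Int(\alpha):(A,\theta_a)\otimes K_v \to (A,\tau)\otimes K_v$ an isomorphism of algebras with involution. By Lemma \ref{new criterion}, the existence of an oriented embedding with respect to $\nu$ over $K_v$ is equivalent to being able to choose such $\alpha$ (and $a$) so that the induced Clifford algebra isomorphism $c(\alpha):\cC(A,\theta_a)\otimes K_v \to \cC(A,\tau)\otimes K_v$ respects the $\Delta(E)$-module structures. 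So the real content is: for which $v$ can the $\Delta(E\otimes_K K_v)$-module structure be matched?

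First I would split the places of $V_K$ into those in $\mathcal P$ and those not in $\mathcal P$, and argue that outside $\mathcal P$ the matching is automatic. There are two sub-cases. If $(E,\sigma)\otimes K_v$ is split (i.e. $E\otimes K_v = (F\otimes K_v)\times(F\otimes K_v)$ with the swap involution), then Lemma \ref{split} gives an isomorphism $\Int(\alpha):(A,\theta)\otimes K_v \to (A,\theta_a)\otimes K_v$ with $c(\alpha)\circ u = u_a$; composing, one gets an oriented embedding with respect to the canonical $u$, and since in the split case both isomorphisms $\Delta(E)\to Z(A,\tau)$ are realized (the discriminant algebra of a split $(E,\sigma)$ is split, so it has a nontrivial automorphism one can compose with), we obtain an oriented embedding for \emph{every} orientation $\nu$. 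If $(E,\sigma)\otimes K_v$ is nonsplit, then either $(A,\tau)\otimes K_v$ is not of the form $(\rM_{2n}(H_v),\tau_h)$ — in which case I would invoke the change-of-orientation mechanism from the proof of Proposition \ref{both orientations}, namely \cite{BLP1} Lemma 2.5.4, which says that replacing $a$ by $a'$ with $a'/a$ chosen suitably changes $[\cC(A,\tau_{a})]$ by $\operatorname{res}_{\Delta(E)/K}\operatorname{cor}_{F/K}(a'/a \cdot (\text{stuff}),d)$; over a local field with $(A,\tau)\otimes K_v$ not the hyperbolic quaternionic form this residue-corestriction pairing is surjective onto $\Br_2(\Delta(E\otimes K_v))$, so one can always correct the module structure — or $(A,\tau)\otimes K_v \simeq (\rM_{2n}(H_v),\tau_h)$ but then $v \in \mathcal P$ is excluded from this case by definition. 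Thus for $v \notin \mathcal P$, an oriented embedding with respect to $\nu$ always exists, so (i) and (ii) both automatically hold at such $v$ and the equivalence at those places is vacuous.

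It remains to treat $v \in \mathcal P$: here $(A,\tau)\otimes K_v \simeq (\rM_{2n}(H_v),\tau_h)$ is the hyperbolic quaternionic algebra with involution but $(E,\sigma)\otimes K_v$ is \emph{split}. Wait — I should double-check: the two bullets defining $\mathcal P$ are the hyperbolic-quaternion condition on $(A,\tau)$ \emph{and} the split condition on $(E,\sigma)\otimes K_v$. Given $(E,\sigma)\otimes K_v$ split, Lemma \ref{split} again provides $\Int(\alpha):(A,\theta)\otimes K_v \to (A,\theta_a)\otimes K_v$ with $c(\alpha)\circ u = u_a$, but now $(A,\tau_a)\otimes K_v$ and $(A,\tau)\otimes K_v$ may only be isomorphic as algebras with involution via some $\beta$, and the obstruction to the combined isomorphism respecting $\Delta(E)$-module structures is precisely whether $[\cC(A\otimes K_v,\theta)] = [\cC(A\otimes K_v,\tau)]$ in $\Br(\Delta(E\otimes_K K_v))$ (both viewed via $u$ and $\nu$). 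This is because, since $(E,\sigma)\otimes K_v$ is split, the change-of-orientation trick is \emph{unavailable} (any $a$ is a norm from $E\otimes K_v$, so $(a,d)$-type corrections vanish, $\Delta(E\otimes K_v)$ being split anyway makes $\Br_2(\Delta(E\otimes K_v))$ potentially nontrivial only through... — here I need to be careful and compute $\Br(\Delta(E\otimes K_v))$ for split $\Delta(E)$ over a local field). So at $v \in \mathcal P$ the existence of an oriented embedding with respect to $\nu$ is equivalent to the Brauer-class equality in (ii), and this gives exactly the stated equivalence.

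The main obstacle I expect is the bookkeeping at places in $\mathcal P$: pinning down exactly why, when $(E,\sigma)\otimes K_v$ is split and $(A,\tau)\otimes K_v$ is the hyperbolic quaternionic algebra, the residual freedom in choosing the pair $(a,\alpha)$ in Lemma \ref{new criterion} is \emph{not} enough to always match the $\Delta(E)$-module structure, so that the genuine obstruction $[\cC(A,\theta)] - [\cC(A,\tau)] \in \Br(\Delta(E\otimes K_v))$ survives — together with checking that this obstruction class is well-defined independently of the choices of $\epsilon$, $a$, $\alpha$. I would handle well-definedness by appealing to \cite{BLP1} Lemma 2.5.4 to track how the class transforms under the allowed modifications (which over a split $(E,\sigma)$ at $v$ leave it unchanged), and I would handle the split-$\mathcal P$ case by a direct Clifford-algebra computation over $K_v$ using that $\tau_h$ is hyperbolic. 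The other parts — the split case via Lemma \ref{split} and the nonsplit-non-hyperbolic case via the surjectivity of $\operatorname{res}\circ\operatorname{cor}$ as in Proposition \ref{both orientations} — are comparatively routine local computations.
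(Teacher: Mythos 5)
Your overall strategy --- reduce to a place-by-place analysis via Lemma \ref{new criterion}, show that outside $\mathcal P$ an oriented embedding exists unconditionally, and identify the Brauer-class equality as the exact obstruction at $v \in \mathcal P$ --- is the same as the paper's, but the case analysis has a genuine hole and two of the local arguments would not survive being written out. First, you have misread the definition of $\mathcal P$: it requires \emph{both} that $(A,\tau)\otimes_K K_v \simeq (\rM_{2n}(H_v),\tau_h)$ \emph{and} that $(E,\sigma)\otimes_K K_v$ is split. Consequently the case where $(E,\sigma)\otimes_K K_v$ is \emph{nonsplit} while $(A,\tau)\otimes_K K_v$ is the hyperbolic quaternionic algebra is \emph{not} in $\mathcal P$, yet your ``$v\notin\mathcal P$, $(E,\sigma)$ nonsplit'' branch only treats non-hyperbolic $(A,\tau)$ and then declares the hyperbolic case ``excluded by definition.'' You even notice the two-bullet definition afterwards but never repair the analysis, so this case is simply never handled. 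It is exactly the case for which Proposition \ref{both orientations} was proved, and it is the only regime where the change-of-orientation mechanism of \cite{BLP1}, Lemma 2.5.4 actually closes the argument: replacing $a$ by a suitable $a'$ changes the Clifford class but keeps $(A,\theta_{a'})\simeq(A,\tau)$ precisely \emph{because $\tau$ is hyperbolic}. In the regime where you invoke that mechanism ($(A,\tau)\otimes_K K_v$ not hyperbolic quaternionic), changing $a$ in general destroys the isomorphism $(A,\theta_a)\simeq(A,\tau)$, so surjectivity of $\res\circ\cor$ does not produce an oriented \emph{embedding}; the paper instead disposes of the split-$A^v$ and $\disc(A^v,\tau)\neq 1$ cases wholesale by citing \cite{BLP1}, Corollary 2.7.3.

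Second, your argument for split $(E,\sigma)\otimes_K K_v$ with $v\notin\mathcal P$ --- that $\Delta(E)$ is split, hence has a nontrivial automorphism one can compose with, hence both orientations are realized --- proves too much: if it were valid it would apply verbatim at $v\in\mathcal P$ and make condition (ii) vacuous, contradicting the theorem. An automorphism of $\Delta(E\otimes_K K_v)$ changes which orientation you are \emph{asking for}, not which orientation a given embedding \emph{induces}; to realize the other orientation you would need an automorphism of $(E,\sigma)\otimes_K K_v$ inducing the swap on $\Delta(E\otimes_K K_v)$, which need not exist. Finally, at $v\in\mathcal P$ you correctly identify the obstruction but explicitly leave the key verification open. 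The paper's argument there is short and is the part you are missing: since $(E,\sigma)\otimes_K K_v$ is split, $\theta$ is hyperbolic as well as $\tau$, so some $\Int(\alpha_v):(A^v,\tau)\to(A^v,\theta)$ exists; comparing the two $\Delta(E\otimes_K K_v)$-module structures $c(\alpha_v)^{-1}\circ u$ and $\nu$ on $\cC(A^v,\tau)$, hypothesis (ii) forces $c(\alpha_v)^{-1}\circ u=\nu$, whence $\Int(\alpha_v^{-1})\circ\epsilon$ is the desired oriented embedding; the converse direction at $v\in\mathcal P$ is Lemma \ref{split} combined with Lemma \ref{new criterion}.
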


\noindent
{\bf Proof.}
For any $a_v\in F_v^\times$, we view $\cC(A,\theta_{a_v})$ as a $\Delta(E\otimes_{K} K_v)$-module via $u_{a_v}$.
Let us first show that (i) implies (ii). For all $v \in V_K$, set $A^v = A \otimes_K K_v$.
By Lemma \ref{new criterion}, we know that for all $v \in V_K$ there exists $a_v \in F_v^{\times}$ and an isomorphism
${\rm Int}(\alpha^v) : (A^v,\theta_{a_v}) \to (A^v,\tau)$  of algebras with involution such that
$c(\alpha):\cC(A^v,\theta_{a_v})\to \cC(A^v,\tau)$ is an isomorphism of $\Delta(E)$-modules.
Let $v \in \mathcal P$. Then
 $(E \otimes K_v,\sigma)$ is split.
Hence by Lemma \ref{split} we have $(A^v,\theta) \simeq (A^v,\theta_{a_v})$ and
$\cC(A^v,\theta)\simeq \cC(A^v,\theta_{a_v} )$ as $\Delta(E\otimes_K K_v)$-modules.
Therefore $[\cC(A^v,\theta) ] = [\cC(A^v,\tau )]$ in ${\rm Br}(\Delta (E \otimes_K K_v))$.

\medskip
Let us now show that (ii) implies (i). If $A^v$ is split or ${\rm disc}(A^v,\tau) \not = 1$ in $K_v/K_v^{\times 2}$, then by \cite{BLP1}, Corollary 2.7.3
there exists an oriented embedding $(E\otimes_K K_v,\sigma) \to (A^v,\tau)$. Hence we only have to consider the following two cases.

\medskip
\noindent {\bf{Case 1.}}  $(E \otimes_K K_v,\sigma)$ nonsplit, $A^v$ non-split, and ${\rm disc}(A^v,\tau) = 1$ in $K_v/K_v^{\times 2}$. Then by Proposition
\ref{both orientations} there exists an oriented embedding of $(E \otimes_K K_v,\sigma)$ into $(A^v,\tau)$ with respect to the orientation $\nu$.

\medskip
 \noindent{\bf{Case 2.}} Assume that $v \in \mathcal P$. In this case, both $(A^v,\tau)$ and $(A^v,\theta)$ are induced by hyperbolic forms. Hence there exists $\alpha_v\in (A^v)^\times$ such that
$\Int(\alpha_v) : (A^v,\tau) \to (A^v,\theta)$ is an isomorphism of algebras with involution. Let $c(\alpha_v) : \cC(A^v,\tau) \to \cC(A^v,\theta)$ be the isomorphism induced by $\Int(\alpha_v)$. Then
$[\cC(A^v,\tau) ] = [\cC(A^v,\theta)] $ in ${\rm Br}(\Delta (E \otimes_K K_v))$, where we regard $\cC(A^v,\tau)$ as an algebra over
$\Delta (E \otimes_K K_v)$ via $c(\alpha_v)^{-1} \circ u$. However, by assumption we have $[\cC(A^v,\theta) ] = [\cC(A^v,\tau)] $
in ${\rm Br}(\Delta (E \otimes_K K_v))$,
where we regard $\cC(A^v,\tau)$ as a $\Delta (E \otimes_K K_v)$-module via $\nu$. Hence $c(\alpha_v)^{-1} \circ u = \nu$, and  $\Int(\alpha_v^{-1})\circ\epsilon$ is an oriented embedding with respect to $\nu$.

\section{The obstruction groups}\label{obstruction groups}

\medskip
{\bf A general construction}

\medskip

Recall from \cite {B} the following construction. Let $I$ be a finite set, and let $C(I)$ be the set of maps
$I \to {\bf Z}/2{\bf Z}$. Let $\sim$ be an equivalence relation on $I$. We denote by $C_{\sim}(I)$ the set
of maps that are constant on the equivalence classes. Note that $C(I)$ and $C_{\sim}(I)$ are finite
elementary abelian 2-groups.

\medskip
{\bf An example}

\medskip
This example will be used in \S \ref{indep}. We say that two finite extensions
$K_1$ and $K_2$ of a field $K$ are  {\it linearly disjoint} if the tensor product $K_1 \otimes_K K_2$ is a field.
Let $E = \underset{i \in I} \prod E_i$ be a product of finite field extensions $E_i$ of $K$, and let us consider the
equivalence relation $\sim$ generated by the elementary equivalence

\medskip
\centerline {$i \sim_e j \iff$ $E_i$ and $E_j$ are linearly disjoint over $K$.}

\medskip
Let $C_{\rm indep}(E)$ be the quotient of $C_{\sim}(I)$ by the constant maps; this is a finite
elementary abelian 2-group.

\medskip
{\bf Commutative \'etale algebras with involution}

\medskip
Let $(E,\sigma)$ be a commutative \'etale $L$-algebra with involution such as in \S \ref{definitions}. Note that $E$
is a product of fields, some of which are stable by $\sigma$, and the others come in pairs,  exchanged by $\sigma$. Let us
write $E = E' \times E''$, where
$E' = \underset{i \in I} \prod E_i$ with $E_i$ a field stable by $\sigma$ for all $i \in I$, and where $E''$ is a product
of fields exchanged by $\sigma$. With the notation of \S \ref {definitions}, we have $F = F' \times F''$, with
$F' = \underset{i \in I} \prod F_i$, where $F_i$ is the fixed field of $\sigma$ in $E_i$ for all $i \in I$.
Note that $E'' = F'' \times F''$.
For all $i \in I$, let $d_i \in F_i^{\times}$ be such that $E_i = F_i (\sqrt d_i)$, and let $d =(d_i)$.

\medskip
{\bf The subsets $V_{i,j}$}

\medskip
Let $V$ be a set, and for all $i,j \in I$ let $V_{i,j}$ be a subset of $V$. We consider the equivalence
relation $\sim$ on $I$ generated by the elementary equivalence $i \sim_e j \iff V_{i,j} \not = \varnothing$.

\medskip
{\bf Global fields}

\medskip
Assume that $K$ is a global field
%and let $V_K$ be the set of places of $K$. If $v \in V_K$, we denote by $K_v$ the
%completion of $K$ at $v$.

\medskip
For all $i \in I$, let $V_i$ be the set of places $v \in V_K$ such that there exists a place of $F_i$ above $v$ that
is inert or ramified in $E_i$. For all $i,j \in I$, set $V_{i,j} = V_i \cap V_j$, and let $\sim$ be the
equivalence relation generated by the elementary equivalence $i \sim_e j \iff V_{i,j} \not = \varnothing$.

\medskip
Let $C(E,\sigma)$ be the quotient of $C_{\sim}(I)$ by the constant maps; note that $C(E,\sigma)$ is a finite
elementary abelian 2-group.

\medskip
As a consequence of \cite {BLP1}, Theorem 5.2.1, we show the following. Let  $(A,\tau)$ be as in \S \ref{definitions}. For all $v \in V_K$, set $E^v = E \otimes_K K_v$
and $A^v = A \otimes_K K_v$.

\begin{theo}\label{0}
Assume that for all $v \in V_K$, there
exists an oriented embedding $(E^v,\sigma) \to (A^v,\tau)$, and that $C(E,\sigma) = 0$. Then there
exists an embedding $(E,\sigma) \to (A,\tau)$.

\end{theo}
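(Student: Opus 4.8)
The strategy is to reduce Theorem \ref{0} to the main local-global theorem of \cite{BLP1} (Theorem 5.2.1 there) by showing that the hypothesis $C(E,\sigma) = 0$, together with our simplified definition of the obstruction set, forces the relevant obstruction class to vanish. First I would recall precisely what \cite{BLP1}, Theorem 5.2.1 asserts: under the assumption that oriented embeddings exist locally everywhere, an embedding $(E,\sigma) \to (A,\tau)$ exists over $K$ if and only if a certain obstruction, living in a group built from the combinatorial data of \S\ref{obstruction groups} (the $V_i$, the equivalence relation $\sim$, and $C_{\sim}(I)$ modulo constants), is trivial. The point is that the group appearing in loc. cit. is, a priori, the more complicated object, so the first real task is to match it with $C(E,\sigma)$: one must check that the equivalence relation generated by $V_{i,j} = V_i \cap V_j \neq \varnothing$ here coincides with (or refines appropriately) the one in \cite{BLP1}, and that quotienting $C_{\sim}(I)$ by the constant maps reproduces their obstruction group. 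This identification is where the "simpler description" promised in the introduction is cashed in.

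Once the obstruction group is identified with $C(E,\sigma)$, the argument is immediate: the obstruction to the existence of a global embedding is an element of $C(E,\sigma)$, and by hypothesis $C(E,\sigma) = 0$, so the obstruction vanishes and \cite{BLP1}, Theorem 5.2.1 yields the desired embedding $(E,\sigma) \to (A,\tau)$. I would also make explicit that the local hypothesis needed to invoke \cite{BLP1}, Theorem 5.2.1 is exactly the one we assume, namely the existence of an oriented embedding $(E^v,\sigma) \to (A^v,\tau)$ for every $v \in V_K$; here the word \emph{oriented} matters, since in the orthogonal non-split even-degree case the plain local embeddings are not sufficient, and this is precisely why Theorem \ref{oriented criterion} above was developed as a companion result.

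The main obstacle is the bookkeeping in the identification step: the obstruction group in \cite{BLP1} is defined through a sequence of cohomological maps (involving the Clifford algebra, the discriminant algebra $\Delta(E)$, corestrictions $\cor_{F_i/K}$, and the classes $(a, d_i)$), and one must verify that its image — equivalently, the quotient that actually controls the obstruction — depends only on the partition of $I$ into $\sim$-classes and hence equals $C_{\sim}(I)$ modulo constants. Concretely, the subtlety is to show that two indices $i, j$ contribute "independently" to the obstruction exactly when $V_i \cap V_j = \varnothing$, i.e. when there is no place where both $E_i/F_i$ and $E_j/F_j$ are locally non-split, and that the constant maps drop out because of the global reciprocity/sum relation on Brauer classes (the Hasse–Brauer–Noether sequence). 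Modulo this translation, which is essentially a reformulation rather than a new computation, the theorem follows. I would therefore structure the written proof as: (1) state the content of \cite{BLP1}, Theorem 5.2.1 in the present notation; (2) identify its obstruction group with $C(E,\sigma)$ via the discussion of \S\ref{obstruction groups}; (3) conclude from $C(E,\sigma) = 0$.
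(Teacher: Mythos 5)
Your proposal follows essentially the same route as the paper: the paper proves Theorem \ref{0} as an immediate consequence of Theorem \ref{main}, whose proof consists precisely of identifying the obstruction group $\sha(E',\sigma)$ of \cite{BLP1} with $C(E,\sigma)$ and then invoking the local-global theorem of \cite{BLP1}. The only cosmetic difference is that in the paper the obstruction is packaged as a homomorphism $\rho : C(E,\sigma) \to \mathbf{Q}/\mathbf{Z}$ rather than as an element of $C(E,\sigma)$, which changes nothing since either vanishes when $C(E,\sigma)=0$.
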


The proof is given in section \ref {ns}, as a consequence of
Theorem \ref {main}.

%\begin{prop}\label {odd sha} Let $K'/K$ be a finite extension of odd degree. Then $$C(E,\sigma) = C(E \otimes_K K',\sigma).$$

%\end{prop}

%\noindent
%{\bf Proof.} For all $i \in I$, let us denote by $V_i(K)$, $V_i(K')$ the sets $V_i$ associated to the fields $K$
%respectively $K'$. It is clear that since $K'$ is an odd degree extension of $K$, we have
%$$V_i(K) \cap V_j(K) \not = \varnothing \iff V_i(K') \cap V_j(K') \not = \varnothing,$$ and that this
%implies that $C(E,\sigma) = C(E \otimes_K K',\sigma).$

\section{Embedding data}

Let $K$ be a global field, and let $(E,\sigma)$ and $(A,\tau)$ be as above. The aim of this section is to recall
some notions from \cite {BLP1} that we need in the following section.

\medskip
We start by recalling from \cite {BLP1} the notion of  embedding data. Assume that for all
$v \in V_K$ there exists an  embedding
$(E^v,\sigma) \to (A^v,\tau)$. The set of $(a) = (a^v)$, with $a^v \in (F^{v})^{\times}$,  such that for all $v \in V_K$  we have
$(A_v,\tau) \simeq (A_v,\theta_{a^v})$, is called a {\it local embedding datum}. This notion will be sufficient for our purpose if $(A,\tau)$
is unitary or split orthogonal; however, when $(A,\tau)$ is nonsplit orthogonal, we need the notion of {\it oriented local embedding
data}, as follows.

\medskip
Let us introduce some notation.

\begin{notation} 
For $K$ and $F$ as above, and for $v \in V_K$, set $F^v = F \otimes _ K K_v$, and we denote by ${\rm cor}_{F^v/K_v}
: {\rm Br}(F^v) \to {\rm Br}(K_v)$ the corestriction map. Recall that we have a homomorphism ${\rm inv}_v : {\rm Br}(K_v)
\to {\bf Q}/{\bf Z}$.

\end{notation}

\medskip
{\bf Oriented embedding data}

\medskip Assume that $(A,\tau)$ is nonsplit orthogonal, and let $\nu : \Delta (E) \to Z(A,\tau)$ be an orientation. Suppose that for all $v \in V_K$ there exists an
oriented  embedding
$(E^v,\sigma) \to (A^v,\tau)$. An {\it oriented embedding datum} will be an element $(a) = (a^v)$ with $a^v \in (F^{v})^{\times}$ such that for all $v \in V_K$ there exists $\alpha^v \in (A^v)^{\times}$  such that
$({\rm Int}(\alpha) \circ \epsilon, a^v,\alpha^v,\nu)$ are parameters of an oriented embedding, and that moreover
the following conditions are satisfied
see \cite {BLP1}, 4.1) :

\smallskip
\noindent
$\bullet$ Let $V'$ be the set of places $v \in V_K$ such that
%If  $v \in \Omega_K$ is such that
$\Delta (E^v)\simeq K_v\times K_v$. Then  ${\rm cor}_{F^v/K_v}(a^v,d) = 0$
for almost all $v \in V'$.

\smallskip
\noindent
$\bullet$ We have $$\underset {v \in V_K} \sum  {\rm cor}_{F^v/K_v}(a^v,d)  = 0.$$

\medskip
We denote by ${\mathcal L}(E,A)$ the set of oriented local embedding data (of course, the orientation
is only required in the nonsplit orthogonal case - if $(A,\tau)$ is unitary or split orthogonal, then
${\mathcal L}(E,A)$ is by definition the set of local embedding data).

\section{A necessary and sufficient condition}\label{ns}

Let $K$ be a global field, and let $(E,\sigma)$ and $(A,\tau)$ be as in the previous sections. The aim of this section is to reformulate
the necessary and sufficient condition for the existence of embeddings in \cite {BLP1}; the only difference is a simpler description of the obstruction
group. Suppose that for all $v \in V_K$ there exists an
oriented  embedding
$(E^v,\sigma) \to (A^v,\tau)$, and let $(a) = (a^v_i) \in \mathcal L (E,A)$ be an oriented local embedding datum.
For all $i \in I$, recall that  $d_i \in F_i^{\times}$ is such that $E_i = F_i (\sqrt d_i)$.

\medskip Let $C(E,\sigma)$ be the
group defined in \S \ref{obstruction groups}.
We define a homomorphism $\rho = \rho_a : C(E,\sigma) \to {\bf Q}/{\bf Z}$ by setting

$$\rho_a(c) = \underset {v \in V_K} \sum  \underset{i \in I} \sum  c(i)  \ {\rm inv}_v {\rm cor}_{F_i^v/K_v} (a_i^v,d_i).$$

We have the following

\begin{theo}\label{main}
{\rm (a)} The homomorphism $\rho$ is independent of the choice of $(a) = (a^v_i) \in \mathcal L (E,A)$.

\medskip
{\rm (b)} There exists an embedding of algebras with involution $(E,\sigma) \to (A,\tau)$ if and only if
$\rho = 0$.

\end{theo}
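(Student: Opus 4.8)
\textbf{Proof plan for Theorem \ref{main}.}
The strategy is to leverage the necessary and sufficient condition of \cite{BLP1}, Theorem 4.6.1, and show that our obstruction homomorphism $\rho_a$ on $C(E,\sigma)$ carries exactly the same information as the obstruction group constructed there. First I would recall the precise shape of the \cite{BLP1} criterion: one has an obstruction group, say $\mathcal{O}(E,A)$, defined via the more complicated recipe of \cite{BLP1}, \S 2, together with a map (also built from the local embedding data $(a^v_i)$ and the quaternion classes $(a_i^v,d_i)$) whose vanishing is equivalent to the existence of a global embedding. The key algebraic input is that $C(E,\sigma)$, as defined in \S\ref{obstruction groups}, is (canonically, or at least compatibly with these maps) isomorphic to $\mathcal{O}(E,A)$, or that there is a surjection from one to the other through which the two obstruction maps factor. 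The equivalence relation $\sim$ generated by $i \sim_e j \iff V_{i,j} = V_i \cap V_j \neq \varnothing$ is precisely the combinatorial gadget needed: a class $c \in C_\sim(I)$ is constant on $\sim$-classes, and the condition that $\rho_a$ be well-defined on the quotient by constant maps reflects the reciprocity relation $\sum_{v} \mathrm{cor}_{F^v/K_v}(a^v,d) = 0$ built into the definition of an oriented local embedding datum.

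For part (a), the independence of $\rho$ from the choice of $(a) = (a^v_i) \in \mathcal{L}(E,A)$, I would argue as follows. Given two local embedding data $(a^v_i)$ and $(b^v_i)$, at each place $v$ both $\theta_{a^v}$ and $\theta_{b^v}$ are isomorphic to $\tau$ over $K_v$, so by Proposition \ref{embedding} (applied locally) the ratio $a^v_i (b^v_i)^{-1}$ lies in the image of the relevant local norm map, hence $\mathrm{cor}_{F_i^v/K_v}(a_i^v,d_i) = \mathrm{cor}_{F_i^v/K_v}(b_i^v,d_i)$ in $\mathrm{Br}(K_v)$ for every $v$ and every $i$. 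This makes each summand of $\rho_a(c)$ literally unchanged, so $\rho_a = \rho_b$. The only subtlety is to confirm that the local isomorphism data, including the orientation constraint in the nonsplit orthogonal case, does not obstruct this norm comparison; here one invokes Lemma \ref{split} at the places in $\mathcal{P}$ (where $(E^v,\sigma)$ is split, so changing $a^v$ costs nothing) and the local analysis behind Theorem \ref{oriented criterion} at the remaining places. I expect part (a) to be essentially a bookkeeping argument once the local comparison of quaternion symbols is in hand.

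For part (b), the plan is to match $\rho = 0$ with the vanishing of the \cite{BLP1} obstruction. One direction—if an embedding $(E,\sigma)\to(A,\tau)$ exists, then $\rho = 0$—follows because a global embedding produces a global element $a \in F^\times$ with $(A,\theta_a)\simeq(A,\tau)$ (Proposition \ref{embedding}), whose image $(a^v_i)$ is a local embedding datum with $\mathrm{cor}_{F_i^v/K_v}(a_i^v,d_i)$ arising from a \emph{global} class $\mathrm{cor}_{F_i/K}(a_i,d_i) \in \mathrm{Br}(K)$; summing the local invariants of a global Brauer class gives zero by the reciprocity law of class field theory, and this forces $\rho_a(c) = 0$ for every $c$ (using that $c$ is constant on $\sim$-classes to regroup the sum appropriately). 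The converse—$\rho = 0$ implies an embedding exists—is where the real content of \cite{BLP1}, Theorem 4.6.1, is needed: one shows that the vanishing of $\rho_a$ on all of $C(E,\sigma)$ is equivalent to the vanishing of the obstruction class in $\mathcal{O}(E,A)$, and then quotes \cite{BLP1}. The main obstacle will be precisely this translation: verifying that the simplified group $C(E,\sigma)$ together with $\rho_a$ faithfully captures the obstruction of \cite{BLP1}, i.e., that no information is lost when passing from the complicated description in \cite{BLP1}, \S 2, to the quotient $C_\sim(I)/(\text{constants})$. This requires a careful comparison of the pairing in \cite{BLP1} with the sum $\sum_v \sum_i c(i)\,\mathrm{inv}_v\,\mathrm{cor}_{F_i^v/K_v}(a_i^v,d_i)$, checking that the equivalence relation generated by $V_{i,j}\neq\varnothing$ is exactly the one implicit in the \cite{BLP1} construction, and that the constant maps correspond to the globally-split part that is automatically killed by reciprocity.
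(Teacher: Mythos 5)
Your overall strategy (reduce to \cite{BLP1}, Theorem 4.6.1, by identifying $C(E,\sigma)$ with the obstruction group constructed there) is exactly the paper's route: the proof consists essentially of an explicit isomorphism $F:\sha(E',\sigma)\to C(E,\sigma)$, obtained by rewriting the defining condition of the set $S'$ of \cite{BLP1} (that $(\cap_{i\in I_0}\Sigma_i)\cup(\cap_{j\in I_1}\Sigma_j)=V_K$) as the condition that $c$ be constant on the $\sim$-classes generated by $V_i\cap V_j\neq\varnothing$; both (a) and (b) are then quoted from \cite{BLP1}, Theorems 4.4.1 and 4.6.1 together with Proposition 5.1.1. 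You correctly flag this comparison as ``the main obstacle,'' but you do not carry it out, and it is the entire content of the proof.

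There is, moreover, a genuine error in your proposed direct argument for part (a). You claim that if $(a^v_i)$ and $(b^v_i)$ are two local embedding data, then $(A^v,\theta_{a^v})\simeq(A^v,\tau)\simeq(A^v,\theta_{b^v})$ forces each ratio $a^v_i(b^v_i)^{-1}$ to be a local norm from $E_i^v$, hence $\mathrm{cor}_{F_i^v/K_v}(a_i^v,d_i)=\mathrm{cor}_{F_i^v/K_v}(b_i^v,d_i)$ for \emph{every} $i$ and $v$, so that $\rho_a(c)=\rho_b(c)$ summand by summand. This is false: the isomorphism of algebras with involution constrains only aggregate invariants of the whole datum (discriminant, Clifford/hermitian invariants of the transfer over all of $E=\prod E_i$ at once), not the individual symbols $(a_i^v,d_i)$ componentwise. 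If the summands were individually invariant, $\rho$ would descend to a well-defined homomorphism on all of $C(I)$ modulo constants, the equivalence relation $\sim$ would be superfluous, and the local-global principle would never fail --- contradicting the known counterexamples that the group $C(E,\sigma)$ is designed to detect. The invariance in (a) holds only for $c$ constant on the $\sim$-equivalence classes and modulo the constant maps, and it is obtained in the paper by transporting \cite{BLP1}, Theorem 4.4.1 (equivalently \cite{B}, Proposition 13.6) through the isomorphism $F$, not by a termwise comparison.
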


\noindent
{\bf Proof.} As we will see, the theorem follows from \cite {BLP1}, Theorems 4.4.1 and 4.6.1, and from the fact that
the group $C(E,\sigma)$ above and the group $\sha(E',\sigma)$ of \cite {BLP1} are isomorphic, a fact
 we shall prove now. Note that part (a) of the theorem can also be deduced directly from \cite {B}, Proposition 13.6.

\medskip
Let us recall the definition of  $\sha(E',\sigma)$ from \cite {BLP1}, 5.1 and \S 3. Recall that
$E' = \underset{i \in I} \prod E_i$ with $E_i$ a field stable by $\sigma$, and
$F' = \underset{i \in I} \prod F_i$, where $F_i$ is the fixed field of $\sigma$ in $E_i$ for all $i \in I$. As in \cite {BLP1},
\S 3, let
$\Sigma_i$ be  the set of $v \in V_K$ such that all the places of $F_i$ above $v$ split
in $E_i$; in other words,  $\Sigma_i$ is the complement of $V_i$ in $V_K$.
Let $m = |I|$.
Given an m-tuple $x = (x_1,..., x_m)\in({\bf Z}  /2 {\bf Z})^{m}$, set $$I_0 = I_0(x) = \{ i \ | \ x_i = 0 \},$$ $$I_1 = I_1(x) = \{ i \ | \ x_i = 1 \}.$$
Let $S'$ be the set
$$ S' = \{(x_1,...,\ x_m)\in ({\bf Z}  /2 {\bf Z})^{m} \ |
(\underset{i\in I_0}{\cap}\Sigma_i)\cup(\underset{j\in I_1}{\cap}\Sigma_j)=V_K \} ,$$ and let $S = S' \cup(0,...,0)\cup(1,...,1).$
Componentwise addition induces a group structure on $S$ (see \cite {BLP1}, Lemma 3.1.1).
We denote by $\sha(E',\sigma)$ the quotient of  $S$ by the subgroup generated by $(1,\dots,1)$.

\medskip
We next show that the groups $\sha(E',\sigma)$ and $C(E,\sigma)$ are isomorphic. The first remark
is that with the above notation, we have
$$ S' = \{(x_1,...,x_m)\in ({\bf Z}  /2 {\bf Z})^{m} \ |
(\underset{i\in I_0}{\cup}V_i)\cap(\underset{j\in I_1}{\cup}V_j)= \varnothing \}.$$

Let us consider the map $F: ({\bf Z}  /2 {\bf Z})^{m} \to C(I)$ sending $(x_i)$ to the map $c : I \to {\bf Z}/2{\bf Z}$
defined by $c(i)= x_i$. We have

$$ S' = \{(x_1,...,x_m)\in ({\bf Z}  /2 {\bf Z})^{m} \ |
(\underset{c(i) = 0}{\cup}V_i)\cap(\underset{c(j) = 1}{\cup}V_j)= \varnothing \}.$$ Note that this shows that
the following two properties are equivalent:

\medskip
(1) $(x_i) \in S$;

\medskip
(2) If $i,j \in I$ are such that $V_i \cap V_j \not = \varnothing$, then we have   $c(i) = c(j).$

\medskip
Recall from \S 3 the definition of the group $C(E,\sigma)$. We consider the equivalence
relation $\sim$ on $I$ generated by the elementary equivalence
$i \sim _ej \iff V_{i,j} \not = \varnothing$, and we denote by $C_{\sim}(I)$ the set of $c \in C(I)$ that
are constant on the equivalence classes.

\medskip
Since $(1) \implies (2)$, $F$ sends $S$ to $C_{\sim}(I)$. Moreover, $F$ is clearly injective. Let
us show that $F : S \to C_{\sim}(I)$ is surjective : this follows from the implication $(2) \implies (1)$.
Hence we obtain an isomorphism of groups $S \to C_{\sim}(I)$, inducing an isomorphism of
groups $\sha(E',\sigma) \to C(E,\sigma)$, as claimed.

\medskip
The isomorphism $F : \sha(E',\sigma) \to C(E,\sigma)$ transforms $\overline f : \sha(E',\sigma) \to {\bf Q}/{\bf Z}$
defined in \cite {BLP1}, 5.1 and 4.4 into the homomorphism $\rho : C(E,\sigma) \to  {\bf Q}/{\bf Z}$ defined above.
By \cite {BLP1}, Theorem 4.4.1 (see also 5.1) the homomorphism $\overline f$ is independent of the
choice of $(a) = (a^v_i) \in \mathcal L (E,A)$, and this implies part (a) of the theorem. Applying
Theorem 4.6.1 and  Proposition 5.1.1, we obtain part (b).

\medskip
\noindent
{\bf Proof of Theorem \ref {0}.} This is an immediate consequence of Theorem \ref {main}.

\medskip

\begin{remark}\label{cohomological interpretation}
As part of the proof of Theorem \ref{main}, we showed that the
groups $C(E,\sigma)$ and $\sha(E',\sigma)$ are isomorphic. Using
this and \cite{BLP3}, Section 2, we obtain a cohomological
interpretation of $C(E,\sigma)$.
\end{remark}

\section {An application - linearly disjoint extensions}\label{indep}

We keep the notation of the previous section; in particular, $K$ is a global field.  The following
is a consequence of \cite {B}, Proposition 14.4 :

\begin{prop}\label{2} Assume that $E = E_1 \times E_2$, where $E_1$ and $E_2$ are linearly disjoint field extensions
of $K$, both stable by $\sigma$. Then $C(E,\sigma) = 0$.

\end{prop}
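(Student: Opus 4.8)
The plan is to unwind the definitions of $C(E,\sigma)$ in the special case $E = E_1 \times E_2$ and show directly that $C_\sim(I)$ consists only of constant maps, so the quotient is trivial. Here $I = \{1,2\}$, so $C(I) \cong ({\bf Z}/2{\bf Z})^2$, and $C_\sim(I)$ is the set of maps $c : I \to {\bf Z}/2{\bf Z}$ constant on the equivalence classes of $\sim$, where $\sim$ is generated by $i \sim_e j \iff V_{i,j} \neq \varnothing$. Since $|I| = 2$, there are only two possibilities: either $V_{1,2} = V_1 \cap V_2 = \varnothing$, in which case $1 \not\sim 2$ and $C_\sim(I) = C(I)$, or $V_1 \cap V_2 \neq \varnothing$, in which case $1 \sim 2$, the only equivalence class is all of $I$, and $C_\sim(I)$ is exactly the two constant maps, so $C(E,\sigma) = C_\sim(I)/(\text{constants}) = 0$. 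Thus everything reduces to showing that the linear disjointness of $E_1$ and $E_2$ forces $V_1 \cap V_2 \neq \varnothing$.

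To prove $V_1 \cap V_2 \neq \varnothing$, I would recall that $V_i$ is the set of places $v \in V_K$ such that some place of $F_i$ above $v$ is inert or ramified in $E_i = F_i(\sqrt{d_i})$ — equivalently, the set of $v$ that do \emph{not} split completely in the sense relevant to $\Sigma_i$, i.e.\ $\Sigma_i = V_K \setminus V_i$ is the set of $v$ all of whose places in $F_i$ split in $E_i$. So I must show that $\Sigma_1 \cup \Sigma_2 \neq V_K$, i.e.\ there is a place $v$ lying in $V_1$ and in $V_2$ simultaneously. This is where the cited result, \cite{B}, Proposition 14.4, does the real work: linear disjointness of the field extensions is exactly the hypothesis under which the density/Chebotarev-type argument in \cite{B} produces such a common place, or more precisely shows the associated $S$-group (hence $C(E,\sigma)$, identified with $\sha(E',\sigma)$ via the isomorphism established in the proof of Theorem \ref{main}) is trivial. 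So the cleanest route is: invoke the isomorphism $C(E,\sigma) \cong \sha(E',\sigma)$ from the proof of Theorem \ref{main}, observe that in the two-field case $S' \subseteq S$ reduces to checking whether $(1,0) \in S'$, which by the reformulation $S' = \{(x_i) : (\cup_{c(i)=0} V_i) \cap (\cup_{c(j)=1} V_j) = \varnothing\}$ amounts precisely to $V_1 \cap V_2 = \varnothing$, and then apply \cite{B}, Proposition 14.4, to conclude $V_1 \cap V_2 \neq \varnothing$, whence $S = \{(0,0),(1,1)\}$ and $\sha(E',\sigma) = 0$.

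Concretely, the steps in order: (1) note $I = \{1,2\}$ and write out $C_\sim(I)$; (2) reduce the claim to $V_1 \cap V_2 \neq \varnothing$; (3) translate this via $\Sigma_i = V_K \setminus V_i$ into $\Sigma_1 \cup \Sigma_2 \neq V_K$, or equivalently into $(1,0) \notin S'$ in the notation of the proof of Theorem \ref{main}; (4) cite \cite{B}, Proposition 14.4, whose hypothesis is precisely the linear disjointness of $E_1$ and $E_2$ over $K$, to obtain the required common place (equivalently, the triviality of the $S$-group); (5) conclude $C(E,\sigma) = 0$.

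I expect the main obstacle to be purely bookkeeping: matching the combinatorial set-up of \cite{B} (the abstract $C_\sim(I)$ / $C_{\rm indep}$ machinery and its Proposition 14.4) with the arithmetic sets $V_i$ defined here, and making sure the notion of linear disjointness used in \cite{B} coincides with $E_1 \otimes_K E_2$ being a field as defined in \S\ref{indep}. There is no hard arithmetic to do in this proposition itself — the number-theoretic input (a Chebotarev/density argument producing a place that is non-split in both $E_1/F_1$ and $E_2/F_2$) is entirely delegated to \cite{B}. So the proof should be short: essentially "$|I|=2$, apply \cite{B}, Proposition 14.4."
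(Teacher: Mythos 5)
Your proposal is correct and follows essentially the same route as the paper: reduce to showing $V_1 \cap V_2 \neq \varnothing$ and invoke \cite{B}, Proposition 14.4. The paper does exactly this, merely reproducing for the reader's convenience the Galois-theoretic argument (the extension $E_1E_2/F_1F_2$ is biquadratic by linear disjointness, so Chebotarev yields a place of $F_1F_2$ inert in both $E_1F_2$ and $E_2F_1$, hence a place $v \in V_1 \cap V_2$) that you delegate to the citation.
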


\noindent
{\bf Proof.} It suffices to show that $V_1 \cap V_2 \not = \varnothing$, and this is done in \cite {B}, Proposition 14.4. We give
the proof here for the convenience of the reader. Let $\Omega/K$ be a Galois extension containing $E_1$
and $E_2$, and set $G = {\rm Gal}(\Omega/L)$. Let $H_i \subset G_i$ be subgroups of $G$ such that for $i = 1,2$,
we have $E_i = \Omega^{H_i}$ and $F_i = \Omega^{G_i}$. Since $E_i$ is a quadratic extension of $F_i$, the subgroup
$H_i$ is of index 2 in $G_i$. By hypothesis, $E_1$ and $E_2$ are linearly disjoint over $K$, therefore
$[G : H_1 \cap H_2 ] = [G : H_1] [G : H_2]$. Note that $F_1$ and $F_2$ are also linearly disjoint over $K$, hence
$[G : G_1 \cap G_2] = [G : G_1][G : G_2]$.
%We claim that $[G_1 \cap G_2: H_1 \cap H_2] = 4$. Indeed,
%$$[G : G_1 \cap G_2] \geqslant [G : G_1][G : G_2] = {1 \over 4}[G:H_1][G:H_2].$$
%On the other hand, $[G:H_1 \cap H_2] = [G:G_1\cap G_2] [G_1 \cap G_2: H_1 \cap H_2]$, hence
%$[G_1 \cap G_2: H_1 \cap H_2] \geqslant 4$.
This implies that $[G_1 \cap G_2: H_1 \cap H_2] = 4$, hence
the quotient $G_1 \cap G_2/H_1 \cap H_2$ is an elementary abelian group of order 4.

The field $\Omega$ contains the composite fields $F_1F_2$ and $E_1E_2$. By the above argument, the
extension $E_1E_2/F_1F_2$ is biquadratic. Hence there exists a place of $F_1F_2$ that is inert in both
$E_1F_2$ and $E_2F_1$. Therefore there exists a place $v$ of $K$ and places $w_i$ of $F_i$ above $v$
that are inert in $E_i$ for $i = 1,2$.

\medskip

\medskip

Let $(A,\tau)$ be a central simple algebra as in the previous sections.

\begin{coro} Assume that $E = E_1 \times E_2$, where $E_1$ and $E_2$ are linearly disjoint field extensions
of $K$, both stable by $\sigma$, and suppose that for all $v \in V_K$ there exists an
oriented  embedding
$(E^v,\sigma) \to (A^v,\tau)$. Then there exists an embedding of algebras with involution $(E,\sigma) \to (A,\tau)$.

\end{coro}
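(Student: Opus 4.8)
The plan is to deduce this corollary directly from the two results assembled just before it: Proposition \ref{2}, which computes the obstruction group under a linear disjointness hypothesis, and Theorem \ref{0}, which is the local-global statement once that group vanishes. So the proof should be essentially formal, with all the substantive work already done.

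First I would invoke Proposition \ref{2}: since $E = E_1 \times E_2$ with $E_1, E_2$ linearly disjoint field extensions of $K$, both stable by $\sigma$, we get $C(E,\sigma) = 0$. Here one should record that with $E = E_1 \times E_2$ and both factors fields stable by $\sigma$, there are no factors of the form $F'' \times F''$ exchanged by $\sigma$; thus, in the notation of \S\ref{obstruction groups}, $E = E'$ and the index set $I$ has two elements, so the group $C(E,\sigma)$ appearing here is exactly the one entering Theorem \ref{0}. The content of Proposition \ref{2} is precisely that $V_1 \cap V_2 \neq \varnothing$, so the two indices are $\sim$-equivalent, $C_\sim(I)$ consists only of constant maps, and its quotient by the constants, namely $C(E,\sigma)$, is trivial.

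Second, I would note that the other hypothesis of Theorem \ref{0} --- the existence of an oriented embedding $(E^v,\sigma) \to (A^v,\tau)$ for every $v \in V_K$ --- is exactly what is assumed in the statement of the corollary. Applying Theorem \ref{0} then yields an embedding of algebras with involution $(E,\sigma) \to (A,\tau)$, which is the desired conclusion. There is essentially no obstacle: the corollary is a formal consequence of Proposition \ref{2} and Theorem \ref{0}, the real work having been carried out in proving those (the computation $V_1 \cap V_2 \neq \varnothing$ via the biquadratic extension $E_1E_2/F_1F_2$, and the identification $C(E,\sigma) \cong \sha(E',\sigma)$ together with Theorem \ref{main}); the only care needed is to check that the two occurrences of $C(E,\sigma)$ --- in Proposition \ref{2} and in Theorem \ref{0} --- refer to the same group, which they do by the remark above.
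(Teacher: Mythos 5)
Your proof is correct and follows essentially the same route as the paper: the paper's one-line proof cites Proposition \ref{2} together with Theorem \ref{main}, while you route through Theorem \ref{0}, which is itself the immediate consequence of Theorem \ref{main} stating exactly the local-global principle when $C(E,\sigma)=0$. Your extra check that $E=E'$ with $|I|=2$, so the two occurrences of $C(E,\sigma)$ agree, is a sensible (if unstated in the paper) precaution.
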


\noindent
{\bf Proof.} This follows from Theorems \ref{main} and Proposition \ref{2}.

\medskip

\medskip

To deal with the case where $E$ has more than two factors, we introduce a group $C_{\rm indep}(E,\sigma)$.
As in \S \ref {obstruction groups}, we write $E = E' \times E''$, where
$E' = \underset{i \in I} \prod E_i$ with $E_i$ a field stable by $\sigma$ for all $i \in I$, and where $E''$ is a product
of fields exchanged by $\sigma$. Let $\approx$ be the equivalence relation $\approx$ on $I$ generated by the
elementary equivalence

\medskip
\centerline {$i \approx_e j \iff$ $E_i$ and $E_j$ are linearly disjoint over $K$.}

\medskip
We denote by $C_{\rm indep}(E,\sigma) = C_{\rm indep}(E')$ the group constructed in \S \ref {obstruction groups} using this
equivalence relation.

\begin{theo}\label {disjoint group} Assume that $C_{\rm indep}(E,\sigma) = 0$, and
suppose that for all $v \in V_K$ there exists an
oriented  embedding
$(E^v,\sigma) \to (A^v,\tau)$. Then there exists an embedding of algebras with involution $(E,\sigma) \to (A,\tau)$.

\end{theo}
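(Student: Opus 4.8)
The strategy is to deduce Theorem \ref{disjoint group} from Theorem \ref{main} by showing that the hypothesis $C_{\rm indep}(E,\sigma) = 0$ forces the obstruction homomorphism $\rho$ to vanish. The point is that both $C(E,\sigma)$ and $C_{\rm indep}(E,\sigma)$ are quotients of $C_{\sim}(I)$ and $C_{\approx}(I)$ respectively by the constant maps, where $\sim$ is the equivalence relation generated by $i \sim_e j \iff V_{i,j} \neq \varnothing$ and $\approx$ is generated by $i \approx_e j \iff E_i, E_j$ linearly disjoint over $K$. So I would first establish that $\approx$ refines $\sim$, i.e. that every $\approx$-class is contained in a $\sim$-class; equivalently, if $E_i$ and $E_j$ are linearly disjoint over $K$ then $V_i \cap V_j \neq \varnothing$. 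This last implication is precisely the content of the proof of Proposition \ref{2} (apply that argument to the pair $E_i, E_j$): one passes to a Galois closure, uses linear disjointness to conclude the extension $E_iE_j/F_iF_j$ is biquadratic, and produces a place $v$ of $K$ with places of $F_i$ and $F_j$ above it inert in $E_i$ and $E_j$ respectively, so $v \in V_i \cap V_j$.

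Once $\approx$ refines $\sim$, any function $c : I \to \mathbf{Z}/2\mathbf{Z}$ that is constant on $\sim$-classes is automatically constant on $\approx$-classes, so $C_{\sim}(I) \subseteq C_{\approx}(I)$ as subgroups of $C(I)$, and this containment descends to a surjection (in fact injection) $C(E,\sigma) \hookrightarrow C_{\rm indep}(E,\sigma)$ on the quotients by constant maps. Hence the vanishing hypothesis $C_{\rm indep}(E,\sigma) = 0$ implies $C(E,\sigma) = 0$. I should be slightly careful here: the natural inclusion $C_{\sim}(I) \subseteq C_{\approx}(I)$ sends constant maps to constant maps, so it induces a well-defined homomorphism on the quotients; injectivity on the quotient holds because a $\sim$-constant map that is globally constant is already zero in $C(E,\sigma)$. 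Either way, $C_{\rm indep}(E,\sigma) = 0 \implies C(E,\sigma) = 0$.

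With $C(E,\sigma) = 0$ in hand, the homomorphism $\rho_a : C(E,\sigma) \to \mathbf{Q}/\mathbf{Z}$ of Theorem \ref{main} has trivial domain, hence $\rho = 0$. By hypothesis there exists an oriented embedding $(E^v,\sigma) \to (A^v,\tau)$ for all $v \in V_K$, so the set $\mathcal L(E,A)$ of oriented local embedding data is nonempty and Theorem \ref{main}(b) applies: since $\rho = 0$, there exists an embedding of algebras with involution $(E,\sigma) \to (A,\tau)$. (Alternatively, one can invoke Theorem \ref{0} directly once $C(E,\sigma) = 0$ is established, since its hypotheses are exactly the local oriented embedding condition together with $C(E,\sigma) = 0$.)

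The only genuine content is the refinement step $\approx$ refines $\sim$, and the main thing to watch is that the argument of Proposition \ref{2} was stated for $E = E_1 \times E_2$; I would note explicitly that it applies verbatim to any pair of linearly disjoint factors $E_i, E_j$ inside a larger product $E'$, since the argument only uses the two extensions $E_i/K$, $E_j/K$ and their fixed fields $F_i, F_j$, not the global structure of $E$. Everything else is bookkeeping with the two quotient groups.
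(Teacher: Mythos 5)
Your proof is correct and follows essentially the same route as the paper: show via the argument of Proposition \ref{2} that linear disjointness of $E_i$ and $E_j$ forces $V_i \cap V_j \neq \varnothing$, deduce $i \approx j \implies i \sim j$ and hence $C_{\rm indep}(E,\sigma) = 0 \implies C(E,\sigma) = 0$, then conclude by Theorem \ref{0}. Your extra care about the induced injection on quotients by constant maps and about applying Proposition \ref{2} to an arbitrary pair of factors is exactly the bookkeeping the paper leaves implicit.
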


\noindent
{\bf Proof.} Recall that the group $C(E,\sigma)$ is constructed using the equivalence relation $\sim$
generated by the elementary equivalence  $i \sim_e j \iff V_i \cap V_j \not = \varnothing$. Theorem \ref {2} implies that if $E_i$ and $E_j$ are linearly disjoint over $K$, then $V_i \cap V_j \not =
\varnothing$, hence $i \approx j \implies i \sim j$. By hypothesis, $C_{\rm indep}(E,\sigma) = 0$, therefore
$C(E,\sigma) = 0$; hence Theorem \ref{0} implies the desired result.

\begin{coro}\label{1} Assume that there exists $i \in I$ such that $E_i$ and $E_j$ are linearly disjoint
over $K$ for all $j \in I$, $j \not = i$. Suppose that for all $v \in V_K$ there exists an
oriented  embedding
$(E^v,\sigma) \to (A^v,\tau)$. Then there exists an embedding of algebras with involution $(E,\sigma) \to (A,\tau)$.

\end{coro}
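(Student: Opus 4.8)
The plan is to deduce Corollary \ref{1} directly from Theorem \ref{disjoint group}, which in turn rests on Theorem \ref{0} and Theorem \ref{2}. So the only thing to check is that the hypothesis of Corollary \ref{1} forces $C_{\mathrm{indep}}(E,\sigma) = 0$; once that is known, Theorem \ref{disjoint group} gives the existence of the embedding immediately. Thus I would phrase the proof as: ``By Theorem \ref{disjoint group} it suffices to show that $C_{\mathrm{indep}}(E,\sigma) = 0$.''

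Next I would unwind the definition of $C_{\mathrm{indep}}(E,\sigma) = C_{\mathrm{indep}}(E')$ from \S\ref{obstruction groups} and \S\ref{indep}. By construction it is the quotient of $C_{\approx}(I)$ by the constant maps, where $\approx$ is the equivalence relation on $I$ generated by the elementary equivalence $i \approx_e j \iff E_i$ and $E_j$ are linearly disjoint over $K$, and $C_{\approx}(I)$ consists of the maps $c : I \to \mathbf{Z}/2\mathbf{Z}$ that are constant on $\approx$-equivalence classes. The key step is to observe that, under the hypothesis, the relation $\approx$ has a single equivalence class, namely all of $I$: indeed, if $i$ is the distinguished index with $E_i$ linearly disjoint from $E_j$ for every $j \neq i$, then $i \approx_e j$ for all $j \in I$ with $j \neq i$, so $j \approx i$ for every $j$, and hence every pair $j, j'$ is $\approx$-equivalent (via $i$). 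Therefore $C_{\approx}(I)$ consists exactly of the constant maps, and its quotient by the constant maps is trivial, i.e. $C_{\mathrm{indep}}(E,\sigma) = 0$.

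I expect the argument to be essentially routine: there is no real obstacle, since the content has already been packaged into Theorem \ref{disjoint group}. The only point requiring a moment's care is the degenerate case $|I| = 1$ (where the hypothesis ``$E_i$ and $E_j$ linearly disjoint for all $j \neq i$'' is vacuous): here $C_{\approx}(I)$ is already a single element and the quotient is trivially $0$, so the conclusion still holds. One should also note in passing that Corollary \ref{1} genuinely generalizes the earlier Corollary (the two-factor case): when $I = \{1,2\}$ and $E_1, E_2$ are linearly disjoint, either index serves as the distinguished $i$. With $C_{\mathrm{indep}}(E,\sigma) = 0$ in hand and the standing hypothesis that oriented embeddings exist everywhere locally, Theorem \ref{disjoint group} yields the embedding $(E,\sigma) \to (A,\tau)$, completing the proof.
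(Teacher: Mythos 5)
Your proposal is correct and follows exactly the paper's route: deduce triviality of $C_{\mathrm{indep}}(E,\sigma)$ from the fact that the distinguished index $i$ is $\approx$-related to every other index (so the generated equivalence relation has a single class and $C_{\approx}(I)$ consists only of constant maps), then invoke Theorem \ref{disjoint group}. The paper states this more tersely, but the argument is the same.
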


\noindent
{\bf Proof.} Since there exists $i \in I$ such that $E_i$ and $E_j$ are linearly disjoint
over $K$ for all $j \in I$, $j \not = i$, the group  $C_{\rm indep}(E,\sigma)$ is trivial,  hence the result
follows from Theorem \ref{disjoint group}.

\begin{coro}\label{pairwise} Assume that the fields $E_i$ are pairwise linearly disjoint over $K$.
Suppose that for all $v \in V_K$ there exists an
oriented  embedding
$(E^v,\sigma) \to (A^v,\tau)$. Then there exists an embedding of algebras with involution $(E,\sigma) \to (A,\tau)$.

\end{coro}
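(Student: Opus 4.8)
The plan is to deduce this immediately from Theorem \ref{disjoint group}, so the only thing to check is that the hypothesis ``the fields $E_i$ are pairwise linearly disjoint over $K$'' forces $C_{\rm indep}(E,\sigma) = 0$. Recall that $C_{\rm indep}(E,\sigma) = C_{\rm indep}(E')$ is by definition the quotient of $C_{\approx}(I)$ by the constant maps, where $\approx$ is the equivalence relation on $I$ generated by the elementary equivalence $i \approx_e j \iff E_i$ and $E_j$ are linearly disjoint over $K$. So the heart of the matter is a purely combinatorial observation about this equivalence relation under the pairwise-disjointness hypothesis.

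First I would dispose of the trivial case $|I| \le 1$, where $C_{\approx}(I)$ already coincides with the group of constant maps and the quotient is $0$. Assuming $|I| \ge 2$, the pairwise hypothesis says that $i \approx_e j$ holds for every pair $i \ne j$ in $I$; hence all of $I$ is a single equivalence class under $\approx$. Consequently $C_{\approx}(I)$ consists exactly of the maps $I \to \mathbf{Z}/2\mathbf{Z}$ that are constant on this one class, i.e.\ the constant maps themselves. Quotienting by the subgroup of constant maps therefore yields $C_{\rm indep}(E,\sigma) = C_{\approx}(I)/(\text{constant maps}) = 0$.

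With $C_{\rm indep}(E,\sigma) = 0$ established, and with the standing hypothesis that for all $v \in V_K$ there exists an oriented embedding $(E^v,\sigma) \to (A^v,\tau)$, Theorem \ref{disjoint group} applies verbatim and produces an embedding of algebras with involution $(E,\sigma) \to (A,\tau)$. There is no real obstacle here: the content of the corollary is entirely front-loaded into Theorem \ref{disjoint group} (and through it into Theorem \ref{0}, Theorem \ref{main}, and Proposition \ref{2}); the present statement is just the cleanest special case of the pairwise hypothesis, for which the equivalence relation $\approx$ collapses $I$ to a point. One could equally well observe that this corollary is the case of Corollary \ref{1} in which \emph{every} $i \in I$ works, but the direct verification that $C_{\rm indep}(E,\sigma) = 0$ is shortest. $\qed$
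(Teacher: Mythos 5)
Your proposal is correct and follows essentially the same route as the paper: the paper deduces the corollary from Corollary \ref{1}, whose own proof is exactly your observation that the hypothesis collapses $I$ to a single $\approx$-class, forcing $C_{\rm indep}(E,\sigma)=0$ so that Theorem \ref{disjoint group} applies. You merely inline that one intermediate step, and you even note the alternative citation of Corollary \ref{1} yourself.
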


\noindent
{\bf Proof.} This follows immediately from Corollary \ref{1}.

\section {An application - odd degree descent}\label{odd}

We keep the notation of the previous sections: $K$ is a global field, $(E,\sigma)$ and $(A,\tau)$ are as before. The aim
of this section is to prove the following:

\begin{theo}\label{odd descent} There exists an embedding of $(E,\sigma)$ into $(A,\tau)$ if and only if such an embedding
exists over a finite extension of odd degree.

\end{theo}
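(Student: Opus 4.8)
The plan is to reduce the statement to the numerical criterion of Theorem \ref{main}, which characterizes the existence of an embedding $(E,\sigma)\to(A,\tau)$ by the vanishing of the homomorphism $\rho = \rho_a : C(E,\sigma)\to {\bf Q}/{\bf Z}$. One direction is trivial: if an embedding exists over $K$, it exists over any extension. So suppose $M/K$ is a finite extension of odd degree $[M:K]=r$ and that an embedding $(E\otimes_K M,\sigma)\to(A\otimes_K M,\tau)$ exists; we must produce one over $K$. The first step is to observe that the hypothesis forces an \emph{oriented} embedding to exist everywhere locally over $K$: for each place $v$ of $K$ and each place $w$ of $M$ above $v$, the existence of an embedding over $M_w$ and the fact that $[M_w:K_v]$ divides $r$ (hence is odd) should, via the local classification results invoked in \S 2 (in particular Proposition \ref{both orientations} and the analysis behind Theorem \ref{oriented criterion}, together with the classical fact that quaternion algebras and the relevant Brauer classes die only after even-degree extensions), let one descend the \emph{embedding} to $K_v$, and then Theorem \ref{oriented criterion} (or directly Proposition \ref{both orientations} in the delicate nonsplit orthogonal case) upgrades this to an \emph{oriented} local embedding for a chosen orientation $\nu:\Delta(E)\to Z(A,\tau)$. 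Hence the hypotheses of Theorem \ref{main} are satisfied over $K$ and it remains to show $\rho_a=0$.

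The heart of the argument is then a restriction-corestriction (transfer) comparison between the obstruction homomorphism over $K$ and the one over $M$. Over $M$ the embedding exists, so by Theorem \ref{main}(b) applied over $M$ we have $\rho^M_{a^M}=0$ for the analogous homomorphism $\rho^M : C(E\otimes_K M,\sigma)\to{\bf Q}/{\bf Z}$, for any oriented local embedding datum $(a^M)$ over $M$. I would choose the local embedding datum over $M$ obtained by restricting a fixed one $(a)=(a^v_i)$ over $K$: for a place $w\mid v$ of $M$, take $a_i^w$ to be the image of $a_i^v$ under $F_i\otimes_K K_v \to F_i\otimes_K M_w$. Then for each $i$ and each $w\mid v$,
$$\inv_w \cor_{F_i^w/M_w}(a_i^w,d_i) = [M_w:K_v]\cdot \inv_v\cor_{F_i^v/K_v}(a_i^v,d_i),$$
by the standard compatibility of corestriction with base change and the fact that $\inv_w = [M_w:K_v]\cdot(\inv_v\circ\res)$ on $\Br(K_v)$; summing over $w\mid v$ and using $\sum_{w\mid v}[M_w:K_v]=r$ gives, after summing over $v$ and $i$ against a class $c$,
$$\rho^M_{a^M}(\res(c)) = r\cdot \rho_a(c)$$
for the natural map $C(E,\sigma)\to C(E\otimes_K M,\sigma)$ induced by $c\mapsto c\circ(\text{pullback of index sets})$.

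The remaining point is that the obstruction group $C(E,\sigma)$ is a finite elementary abelian $2$-group (it is a subquotient of $({\bf Z}/2{\bf Z})^{|I|}$), so multiplication by the odd integer $r$ is an automorphism of ${\bf Q}/{\bf Z}$ restricted to its image, and in fact $\rho_a$ takes values in the $2$-torsion of ${\bf Q}/{\bf Z}$; thus $r\cdot\rho_a = \rho_a$ as homomorphisms on $C(E,\sigma)$. Since $\rho^M_{a^M}=0$, we get $\rho_a(c) = r\cdot\rho_a(c) = \rho^M_{a^M}(\res(c)) = 0$ for every $c$, provided the restriction map $C(E,\sigma)\to C(E\otimes_K M,\sigma)$ is injective on the relevant classes — but we do not even need injectivity, since the identity $\rho_a = r\cdot\rho_a$ combined with $2$-torsion already forces $\rho_a(c)=0$ directly once we know $r\cdot\rho_a(c)$ equals something computed over $M$ that vanishes. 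By Theorem \ref{main}(b) over $K$, the vanishing $\rho_a=0$ yields an embedding $(E,\sigma)\to(A,\tau)$, completing the proof.

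\textbf{Main obstacle.} I expect the delicate step to be the descent of the \emph{oriented} local condition, i.e. verifying that the hypotheses of Theorem \ref{main} genuinely hold over $K$: one must check at each place $v$ that existence of an embedding over an odd-degree local extension $M_w/K_v$ implies existence of an embedding, and then an \emph{oriented} embedding, over $K_v$ itself. This is where the case analysis of \S 2 (split vs.\ nonsplit $A^v$, the discriminant condition, and the set $\mathcal P$) has to be revisited with the odd-degree extension in hand; the favorable point is that all the relevant obstructions live in $2$-torsion Brauer groups and Clifford-algebra module structures that are insensitive to odd base change, but making this uniform across all places — including the finitely many awkward places in $\mathcal P$ and those with ${\rm disc}(A^v,\tau)=1$ — will require care.
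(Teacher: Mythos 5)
Your proposal is correct and follows essentially the same route as the paper: descend the existence of local (oriented) embeddings along odd-degree completions using the odd-degree insensitivity of the local criteria and Theorem \ref{oriented criterion}, then compare $\rho$ over $K$ with $\rho$ over the odd-degree extension via a restriction--corestriction computation, and conclude from the fact that $\rho$ takes values in $2$-torsion while the degree is odd. The paper packages the index-set bookkeeping in the map $\pi^{\vee}$ of Lemma \ref{injection} and phrases the local count as ``an odd number of places $w\mid v$ with $[K'_w:K_v]$ odd,'' but this is the same computation as your $\sum_{w\mid v}[M_w:K_v]=r$.
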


In other words, if there exists a finite extension of odd degree $K'/K$ such that $(E,\sigma)\otimes _K K'$ can be embedded into $(A,\tau) \otimes_K K'$,
then $(E,\sigma)$ can be embedded into $(A,\tau)$.

\begin{remark} \label{Totaro}

Note that the embeddings of algebras with involution considered here can be viewed as points on oriented embedding functors, which are homogeneous spaces under classical groups with connected stabilizer (see \cite{Lee} and \cite{BLP3}). Therefore Theorem \ref{odd descent} gives an affirmative answer to Totaro's question on zero cycles of degree one (cf. \cite{T04} Question 0.2)
in our case. A more general statement is proved in Appendix.

\end{remark}

\medskip

We start by showing that there is a natural injective map from $C(E,\sigma)$ to $C(E\otimes_K K',\sigma)$.
For each $i\in I$ let $F_i\otimes_K K'\simeq \underset{j\in S(i)}{\prod}F'_{i,j}$ where $F'_{i,j}$'s are extensions of $K'$.
Define \[I'= \{(i,j)|\ i\in I,\ j\in S(i) \mbox{ and the image of $d_i$ in $F'_{i,j}$ is not a square.}\}\]
Define a map $\pi$ from $I'$ to $I$ by sending $(i,j)$ to $i$.

\begin{lemma}\label{injection}
The map $\pi$ induces an injective map \[\pi^{\vee}:C(E,\sigma)\to C(E\otimes_K K',\sigma).\]
\end{lemma}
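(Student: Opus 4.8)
The plan is to work through the explicit combinatorial description of the obstruction groups given in \S\ref{obstruction groups}. Recall that $C(E,\sigma)$ is the quotient of $C_{\sim}(I)$ by the constant maps, where $\sim$ is generated by $i \sim_e j \iff V_i \cap V_j \neq \varnothing$, and $C(E\otimes_K K',\sigma)$ is the analogous quotient of $C_{\sim'}(I')$, where for $(i,j), (i',j') \in I'$ we set $(i,j) \sim'_e (i',j') \iff V'_{(i,j)} \cap V'_{(i',j')} \neq \varnothing$, with $V'_{(i,j)} \subseteq V_{K'}$ the set of places of $K'$ having a place of $F'_{i,j}$ above them that is inert or ramified in $E'_{i,j} = F'_{i,j}(\sqrt{d_i})$. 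The first step is to define $\pi^{\vee}$: given $c \in C_{\sim}(I)$, the pullback $\pi^*c = c \circ \pi$ is a map $I' \to \mathbf{Z}/2\mathbf{Z}$; I will check that $\pi^*c$ lies in $C_{\sim'}(I')$, i.e. is constant on $\sim'$-classes, so that passing to the quotients by constant maps gives a well-defined $\pi^{\vee}$.

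The key compatibility to establish is: if $(i,j) \sim'_e (i',j')$ in $I'$, then $i \sim i'$ in $I$. For this I would argue at the level of places. If $w \in V'_{(i,j)} \cap V'_{(i',j')}$, let $v \in V_K$ be the place of $K$ below $w$. Because $w$ has a place of $F'_{i,j}$ above it that is inert or ramified in $F'_{i,j}(\sqrt{d_i})$, and $F'_{i,j}$ is one of the factors of $F_i \otimes_K K'$, this forces $v$ to have a place of $F_i$ above it that is inert or ramified in $E_i = F_i(\sqrt{d_i})$ — if every place of $F_i$ above $v$ split in $E_i$, then $d_i$ would be a square in every completion of $F_i$ at those places, hence a square in $F_i \otimes_K K_v$, hence in every completion of $F'_{i,j}$ above $w$, contradicting the inert/ramified hypothesis. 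Thus $v \in V_i$, and symmetrically $v \in V_{i'}$, so $V_i \cap V_{i'} \neq \varnothing$ and $i \sim_e i'$. Since $\pi^*c$ is constant on $\sim$-classes of $I$ via $\pi$, this shows $\pi^*c \in C_{\sim'}(I')$.

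Finally I would prove injectivity. Suppose $\pi^{\vee}(\bar c) = 0$, i.e. $\pi^*c$ is a constant map on $I'$, equal to some $\varepsilon \in \mathbf{Z}/2\mathbf{Z}$. The point is that $\pi : I' \to I$ is surjective: for each $i \in I$, since $d_i \notin F_i^{\times 2}$ (as $E_i = F_i(\sqrt{d_i})$ is a field), the image of $d_i$ in at least one factor $F'_{i,j}$ of $F_i \otimes_K K'$ is not a square — indeed $F_i \otimes_K K' = \prod_j F'_{i,j}$ and if $d_i$ were a square in every factor it would be a square in $F_i \otimes_K K'$, but $F_i \otimes_K K'$ is a product of odd-degree extensions of $F_i$ so the squaring map on it, composed with the norm or simply via a degree argument on $F_i(\sqrt{d_i}) \otimes_K K'$, cannot make $d_i$ a square without $d_i$ already being one in $F_i$. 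Hence $(i,j) \in I'$ for some $j$ and $\pi$ is onto. Therefore $c = c \circ \pi \circ (\text{any section})$ — more precisely, $c(i) = (\pi^*c)(i,j) = \varepsilon$ for the chosen $j$, so $c$ is the constant map $\varepsilon$, i.e. $\bar c = 0$ in $C(E,\sigma)$. This gives injectivity.

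The step I expect to be the main obstacle is the surjectivity of $\pi$, which rests on the claim that $d_i$ cannot become a square in $F_i \otimes_K K'$ when $[K':K]$ is odd; this is the one place where the odd-degree hypothesis is genuinely used, and it requires the standard fact that an odd-degree extension does not split a quadratic extension — equivalently, that $E_i \otimes_K K' = (F_i(\sqrt{d_i})) \otimes_K K'$ has a factor containing $\sqrt{d_i}$ nontrivially, which follows from comparing degrees over the factors of $F_i \otimes_K K'$ (each of odd degree over $F_i$). The compatibility argument and the unwinding of the $V_{i,j}$ definitions are routine once the place-theoretic translation is in hand.
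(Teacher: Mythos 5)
Your proof is correct and follows essentially the same route as the paper's: the heart of both arguments is that a place $w$ of $K'$ witnessing $(i_1,j_1)\sim_e(i_2,j_2)$ in $I'$ restricts to a place $v$ of $K$ lying in $V_{i_1}\cap V_{i_2}$, so that pullback along $\pi$ sends $C_{\sim}(I)$ into $C_{\sim}(I')$. You additionally make explicit the surjectivity of $\pi$ needed for injectivity (which the paper leaves implicit, and which is where the odd-degree hypothesis enters); the only blemish is your parenthetical claim that the factors $F'_{i,j}$ of $F_i\otimes_K K'$ are \emph{each} of odd degree over $F_i$ --- in general only at least one of them is, since their degrees sum to $[K':K]$, but that single odd-degree factor already suffices for your argument.
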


\begin{proof}
It is sufficient to show that if $(i_1,k_1)\sim_e (i_2,j_2)$ in $I'$, then $i_1 \sim_e i_2$ in $I$.
Suppose that $(i_1,k_1)\sim_e (i_2,j_2)$ in $I'$.
By definition $V_{(i_1,j_1),(i_2,j_2)}\neq \emptyset$.
Pick $w\in V_{(i_1,j_1),(i_2,j_2)}$ and let $v$ be the restriction of $w$ on $K$.
Then $v\in V_{i_1,i_2}$. Hence $i_1\sim_e i_2$.
\end{proof}

\begin{remark}
One can take the cohomological point of view of the group $C(E,\sigma)$ (see also Remark \ref{cohomological interpretation}).
Then Lemma \ref{injection} follows immediately from the restriction-corestriction of Galois cohomology.
\end{remark}

\begin{lemma}\label{local odd degree extension}
Assume that $K$ is a local field.
Let $M$ be a local field which is an extension of $K$ with odd degree.
If there is an embedding of $(E,\sigma)\otimes _K M$  into $(A,\tau) \otimes_K M$,
then there is an embedding of $(E,\sigma)$  into $(A,\tau)$.
\end{lemma}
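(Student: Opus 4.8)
The statement is local, so fix the local field $K$, the odd-degree extension $M/K$, and assume $(E,\sigma)\otimes_K M$ embeds in $(A,\tau)\otimes_K M$. The plan is to reduce to Proposition \ref{embedding}: it suffices to produce an $a\in F^\times$ with $(A,\theta_a)\simeq(A,\tau)$ as algebras with involution. First I would fix the reference $L$-embedding $\epsilon:E\to A$ and the involution $\theta$ as in \S\ref{definitions}, so that $(A,\theta_a)$ is defined for all $a\in F^\times$; note that $\epsilon\otimes M$ plays the same role for the base-changed algebras, and $\theta_a\otimes M=(\theta\otimes M)_a$. By hypothesis and Proposition \ref{embedding} applied over $M$, there is $b\in(F\otimes_K M)^\times$ with $(A\otimes_K M,\theta_b)\simeq(A\otimes_K M,\tau\otimes M)$. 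The goal is to descend such a $b$ to an element of $F^\times$.

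\textbf{Key steps.} The isomorphism class of an algebra with involution $(A,\theta_a)$ of a fixed type, with $A$ and its center fixed, is governed by a small list of classical invariants: over a local field, an orthogonal involution is determined by its discriminant in $K^\times/K^{\times2}$ together with the Clifford/Brauer invariant, a symplectic involution is determined by nothing extra (they are all conjugate), and a unitary involution is determined by its discriminant in $K^\times/N_{L/K}(L^\times)$ and by a relative Brauer-type class. Concretely, over a local or global field the classification reduces to a comparison of quaternion classes of the form $\mathrm{cor}_{F/K}(a,d)$ and of discriminant-type elements; compare the corestriction formulas already used in Proposition \ref{both orientations} and \S\ref{ns}. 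So the second step is: express the condition ``$(A,\theta_a)\simeq(A,\tau)$'' as the vanishing (or matching) of finitely many classes in $2$-torsion groups built from $\mathrm{Br}$, $K^\times/K^{\times2}$, or $K^\times/N_{L/K}L^\times$, each of which is a finite elementary abelian $2$-group. Third step: apply restriction–corestriction for the odd-degree extension $M/K$. Since $[M:K]$ is odd, the restriction map on any such $2$-torsion (or more generally $2$-primary) abelian group is injective, because $\mathrm{cor}_{M/K}\circ\mathrm{res}_{M/K}$ is multiplication by $[M:K]$, an automorphism on a $2$-group. Hence the invariants of $(A,\theta)$, $(A,\tau)$ that become equal after $\otimes_K M$ must already be equal over $K$, once we know some $a$ over $K$ realizes the ``shape'' of $b$. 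The one subtlety is that $b$ lives in $(F\otimes_K M)^\times$, not obviously in $F^\times$; here I would not try to descend $b$ itself but rather observe that the set of achievable invariant-tuples $\{\,\text{invariants of }(A,\theta_a):a\in F^\times\,\}$ is a coset of a subgroup $H\subseteq$ (the finite $2$-group of invariants), because $a\mapsto$ (invariants of $\theta_a$) is affine-linear in the class of $a$ (each $\mathrm{cor}_{F/K}(a,d)$ and each discriminant term is a homomorphism in $a$); and the analogous statement holds over $M$ with the same subgroup after restriction. The target ``invariant of $\tau$'' lies in the $M$-coset by hypothesis; restriction being injective on these $2$-groups, it already lies in the $K$-coset, so some $a\in F^\times$ works.

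\textbf{Main obstacle.} The delicate point is the passage from ``the invariants of $(A,\theta_b)$ and $(A,\tau)$ agree over $M$'' back to ``the invariants over $K$ can be matched by some $a\in F^\times$''. This requires (i) a clean statement that isomorphism of $(A,\theta_a)$ with $(A,\tau)$ over a local field is detected by the explicit finite set of invariants above — I would cite \cite{KMRT} (Chapter II for orthogonal/symplectic, the unitary case similarly) and the local classification used implicitly in \cite{BLP2}; (ii) the homomorphism property of $a\mapsto\mathrm{cor}_{F/K}(a,d)$ and of the discriminant map, which is routine; and (iii) compatibility of all these invariants with base change to $M$ together with injectivity of $\mathrm{res}_{M/K}$ on the relevant $2$-primary torsion. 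Point (iii) is the crux: it is exactly the local, abelian-group incarnation of Totaro's odd-degree phenomenon, and everything else is bookkeeping. Once these are in place, Proposition \ref{embedding} yields the embedding $(E,\sigma)\hookrightarrow(A,\tau)$ over $K$, completing the proof.
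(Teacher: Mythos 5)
Your overall strategy (reduce to Proposition \ref{embedding}, encode the condition $(A,\theta_a)\simeq(A,\tau)$ in finitely many $2$-torsion invariants, and exploit that $[M:K]$ is odd) is reasonable, but it is not the paper's argument, and as written it has a genuine gap at the step you yourself flag as the crux. Injectivity of $\res_{M/K}$ on $2$-torsion shows that if the target invariant lies in $\res_{M/K}\bigl(\{\text{invariants of }\theta_a : a\in F^\times\}\bigr)$ then some $a\in F^\times$ works. But your hypothesis only places the target in the coset $\{\text{invariants of }\theta_b : b\in (F\otimes_K M)^\times\}$, whose underlying subgroup $H_M$ is generated by the \emph{larger} parameter group $(F\otimes_K M)^\times$ and could a priori be strictly bigger than $\res_{M/K}(H_K)$. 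If $H_M$ is bigger, membership over $M$ is a weaker condition and nothing descends. Your assertion that ``the analogous statement holds over $M$ with the same subgroup after restriction'' is exactly what must be proved, and injectivity of restriction does not prove it. The missing mechanism is a descent of the parameter itself: setting $a:=N_{F\otimes_K M/F}(b)\in F^\times$, the projection formula gives $\cor_{F\otimes_K M/F}(b,d)=(N_{F\otimes_K M/F}(b),d)=(a,d)$ in $\Br(F)$, and $N_{M/K}(x)=x^{[M:K]}\equiv x \pmod{K^{\times 2}}$ for $x\in K^\times$, so both the discriminant-type and Clifford/Brauer-type invariants of $\theta_a$ over $K$ restrict to those of $\theta_b$ over $M$; one also needs the elementary fact that an odd-degree extension of local fields cannot split a quadratic extension (so that the ``shape'' of $(E,\sigma)$ --- which $d_i$ are squares --- is unchanged). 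None of this appears in your sketch. You should also be careful that not all of the local conditions are of ``coset of a subgroup'' type: the criteria include rank and splitness conditions (and signature conditions at real places, where odd degree forces $M=K$ and there is nothing to prove), and the very existence of the algebra embedding $\epsilon:E\to A$ over $K$ is itself part of what must be descended, not a free standing hypothesis.

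The paper's proof is much more direct: it simply invokes the explicit necessary and sufficient local criteria of \cite{BLP2} (Theorems 2.1.1--2.4.2), and observes case by case that each of those conditions holds over $K$ if and only if it holds over an odd-degree extension $M$. That route avoids the coset bookkeeping entirely; your route can be repaired by inserting the norm/projection-formula descent above, but without it the proof does not close.
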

\begin{proof}
It is a consequence of the criteria of existence of local embeddings in \cite{BLP2}.
Namely the conditions in \cite{BLP2} Theorems 2.1.1, 2.1.2, 2.2.1, 2.2.2, 2.3.1, 2.4.1, 2.4.2 hold over an odd degree extension $M$ if and only if they hold over $K$.
Hence if there is an embedding of $(E,\sigma)\otimes _K M$  into $(A,\tau) \otimes_K M$, then
there is an embedding of $(E,\sigma)$  into $(A,\tau)$.
\end{proof}

\medskip

\noindent
{\bf Proof of Theorem \ref{odd descent}.}
We only prove the nontrivial direction.
Let $K'/K$ be an extension of odd degree.
Suppose that there exists an embedding $\iota$ of $(E,\sigma)\otimes _K K'$  into $(A,\tau) \otimes_K K'$. By definition, 
$\iota$ is an oriented embedding with respect to the orientation $u_{\iota}$ associated to $\iota$.
The embedding $\iota$ gives rise to an oriented embedding
of $(E,\sigma)\otimes _K K'$  into $(A,\tau) \otimes_K K'$ with respect to the orientation $u_{\iota}$ 
everywhere locally. Since  $[K':K]$ is odd, for all $v\in V_K$ there exists  a place $w\in V_{K'}$ over $v$ such that $[K'_w:K_v]$ is odd.
By Lemma \ref{local odd degree extension} there exists an embedding of $(E,\sigma)\otimes _K K_v$  into $(A,\tau) \otimes_K K_v$.
Moreover if $[\cC(A \otimes_K K'_w,\theta) ] = [\cC(A \otimes_K K'_w,\tau)] $
in ${\rm Br}(\Delta (E \otimes_K K'_w))$, then $[\cC(A \otimes_K K_v,\theta) ] = [\cC(A \otimes_K K_v,\tau)] $
in ${\rm Br}(\Delta (E \otimes_K K_v))$.
Proposition \ref{oriented criterion}  implies that there
exists an oriented embedding of $(E,\sigma)$ into $(A,\tau)$ locally everywhere.
Let $a = (a^v)$ be an oriented embedding datum,  let $c\in C(E,\sigma)$, and let

$$\rho_a(c) = \underset {v \in V_K} \sum \  \underset{i \in I} \sum \ c(i) \ {\rm inv}_v {\rm cor}_{F_i^v/K_v} (a_i^v,d_i)$$
be the associated homomorphism.

Let $v\in V_K$, and  $w$  be a place of $K'$ over $v$.
Let us take the extension of $a^v$ in $F^v\otimes_{K_v} K'_w$to define $\rho_{K'}$.
If $[K'_w:K_v]$ is odd, then $${\rm inv}_v{\rm cor}_{F_i^v/K_v} (a_i^v,d_i)={\rm inv}_w{\rm cor}_{F_i^v\otimes_{K_v}K'_w/K'_w} (a_i^w,d_i).$$
If $[K'_w:K_v]$ is even, then ${\rm inv}_w{\rm cor}_{F_i^v\otimes_{K_v}K'_w/K'_w} (a_i^w,d_i)=0$.

As $K'$ is an odd degree extension of $K$, there is an odd number of places $w$ of $K'$ over $v$ such that $[K'_w:K_v]$ is  odd.
Hence \[{\rm inv}_v{\rm cor}_{F_i^v/K_v} (a_i^v,d_i)=\underset{w|v}{\sum}{\rm inv}_w{\rm cor}_{F_i^v\otimes_{K_v}K'_w/K'_w} (a_i^w,d_i).\]

For  $c \in C(E,\sigma)$, let $c'=\pi^{\vee}(c)$. In other words $c'(i,j)=c(i)$.
Let $a^{v'}_{i,j}$ be the image of $a^v_i$  in $F'_{i,j}\otimes_{K'} K'_{v'}$. Then ($a^{v'}_{i,j}$) is an oriented
embedding data over $K'$.
Denote by $d_{i,j}$  the image of $d_i$ in $F'_{i,j}$.
Note that $F_i\otimes_K K'\otimes_{K'} K'_w\simeq F_i^v\otimes_{K_v}K'_w$.
Therefore
\begin{align*}
&\underset {v' \in V_{K'}} \sum \  \underset{(i,j) \in I'} \sum  \ c'(i,j) \ {\rm inv}_{v'} {\rm cor}_{F_{i,j}^{v'}/K'_{v'}} (a_{i,j}^{v'},d_{i,j})\\
 &=\underset {v' \in V_{K'}} \sum \  \underset{i \in I} \sum  \ c(i)\underset{j\in S(i)}{\sum} \ {\rm inv}_{v'} {\rm cor}_{F_{i,j}^{v'}/K'_{v'}} (a_{i,j}^{v'},d_{i,j})\\
 &=\underset {v' \in V_{K'}} \sum \  \underset{i \in I} \sum  \ c(i) \ {\rm inv}_{v'} {\rm cor}_{F_{i}^{v}\otimes_{K_v} K'_{v'}/K'_{v'}} (a_{i}^{v'},d_{i})\\
 &= \underset {v \in V_K} \sum \  \underset{i \in I} \sum \ c(i) \ {\rm inv}_v {\rm cor}_{F^v/K_v} (a_i^v,d_i).
\end{align*}

By hypothesis, $(E,\sigma)\otimes _K K'$ can be embedded into $(A,\tau) \otimes_K K'$, hence by Theorem \ref{main}, we have $\rho_{K'} = 0$. The
above argument show that $\rho_K = 0$ as well, hence applying Theorem \ref{main} again, we see that $(E,\sigma)$ can be embedded into $(A,\tau)$.

\bigskip

\section{Appendix}
In this section, we give a positive answer to  Totaro's question for homogeneous spaces under connected linear algebraic groups with connected stabilizers over number  fields.
Namely we show the following.

\begin{prop}\label{zero cycle}
Let $X$ be a homogeneous space over a number  field under a connected linear algebraic group $G$ with connected stabilizers.
Suppose that X has a zero-cycle of degree one, then it has a rational point.
\end{prop}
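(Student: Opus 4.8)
The plan is to reduce the statement to known results on the Hasse principle and weak approximation for homogeneous spaces of connected linear algebraic groups with connected stabilizers, following the strategy suggested by Colliot-Th\'el\`ene and using Borovoi's machinery. First I would recall that over a number field $k$, for a homogeneous space $X$ under a connected linear algebraic group $G$ with connected geometric stabilizers, Borovoi has shown that the Brauer--Manin obstruction to the Hasse principle is the only one: if $X(k_v) \neq \varnothing$ for all places $v$ and the Brauer--Manin set $X(\mathbb{A}_k)^{\Br} \neq \varnothing$, then $X(k) \neq \varnothing$ (and moreover weak approximation holds off a finite set in the sense controlled by the algebraic Brauer group). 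So the task is twofold: (1) show that a zero-cycle of degree one forces $X(k_v) \neq \varnothing$ for every place $v$, and (2) show that it forces the Brauer--Manin condition to be satisfied.

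For step (1), a zero-cycle of degree one is a $\mathbb{Z}$-linear combination $\sum n_i [x_i]$ of closed points $x_i$ with residue fields $k(x_i)$ finite over $k$ and $\sum n_i [k(x_i):k] = 1$. Localizing at a place $v$, each $x_i$ gives a $k_v$-point after base change to some completion $k(x_i)_w$, i.e. $X(k(x_i) \otimes_k k_v) \neq \varnothing$, hence $X$ acquires points over the various local fields $k(x_i)_w$. Since $\sum n_i [k(x_i):k] = 1$ is odd in particular, for each $v$ at least one $k(x_i) \otimes_k k_v$ has a factor $k(x_i)_w$ of odd degree over $k_v$; but in fact we get more simply: the existence of the zero-cycle guarantees $X(k_v) \neq \varnothing$ because, $X$ being a homogeneous space under a \emph{connected} group $G$ over the local field $k_v$, any point over a finite extension $k'_v/k_v$ descends --- here one uses that $H^1(k_v, G') = 0$ vanishing is not automatic, so instead one invokes that $X_{k_v}$ is again a homogeneous space under the connected group $G_{k_v}$ with connected stabilizer, and over a local field such a space with a point over \emph{some} finite extension already has a $k_v$-point by a corestriction argument on $H^1$ of the connected stabilizer (using that for connected groups over local fields, $H^1$ behaves well under restriction--corestriction). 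The cleanest route is: $X(k_v)\neq\varnothing$ follows from $X$ having a zero-cycle of degree one over $k_v$ together with the fact that the map $X(k_v) \to (\text{zero-cycles of degree one})$ being surjective modulo the connectedness input.

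For step (2), which I expect to be the main obstacle, I would argue that the Brauer--Manin pairing kills any element of degree-one zero-cycle class. Concretely, for $b \in \Br(X)$ and a family of local points $(P_v)$, the sum $\sum_v \mathrm{inv}_v(b(P_v))$ must be shown to vanish; the input is that evaluation of $b$ on a closed point $x$ of degree $d$ and corestriction from $k(x)$ to $k$ gives, via the reciprocity law $\sum_{v} \mathrm{inv}_v = 0$ on $\Br(k)$, a constraint weighted by $d$. Summing over the zero-cycle with its coefficients $n_i$ and using $\sum n_i[k(x_i):k]=1$ produces exactly one copy of $\sum_v \mathrm{inv}_v(b(P_v))$, which therefore equals a global class evaluated by reciprocity, hence is $0$. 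This shows $(P_v) \in X(\mathbb{A}_k)^{\Br}$ for a suitable choice of local points compatible with the given zero-cycle, so Borovoi's theorem applies and yields $X(k)\neq\varnothing$. The delicate point is ensuring the local points one feeds into the Brauer--Manin set can be chosen to be exactly the local components arising from the zero-cycle (so that the weighted-sum bookkeeping is valid simultaneously for all $b \in \Br(X)$), which is where the connectedness of the stabilizers --- guaranteeing the relevant local conditions are "open" and that $\Br(X)/\Br(k)$ is finite --- is essential.
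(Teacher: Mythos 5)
Your proposal routes the problem through the Brauer--Manin obstruction, and as written it has two genuine gaps. First, in your step (1) the assertion that a homogeneous space over a local field ``with a point over \emph{some} finite extension already has a $k_v$-point'' is false as stated (one needs at least extensions of coprime degrees, which the zero-cycle does supply locally), and even the corrected statement is not a formality: it is itself an instance of Totaro's question over $K_v$ and requires an actual descent argument, e.g.\ a restriction--corestriction computation on Borovoi's obstruction class, which in turn requires first reducing to the case where $G^{\mathrm{ss}}$ is simply connected and $G^{\mathrm{tor}}$ is quasi-trivial --- a reduction (via Colliot-Th\'el\`ene's flasque resolutions) that your plan never performs. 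Second, and more seriously, your step (2) does not go through: the reciprocity computation over the fields $k(x_i)$ shows that the \emph{adelic zero-cycle} of degree one attached to $\sum n_i[x_i]$ is orthogonal to $\mathrm{Br}(X)$; it does not produce a single adelic \emph{point} $(P_v)\in X(\mathbb{A}_K)$ with $\sum_v \mathrm{inv}_v\, b(P_v)=0$ simultaneously for all $b$. Passing from an adelic zero-cycle orthogonal to $\mathrm{Br}(X)$ to an adelic point orthogonal to $\mathrm{Br}(X)$ is precisely the hard step, and you flag it as ``delicate'' without closing it; connectedness of stabilizers and finiteness of $\mathrm{Br}(X)/\mathrm{Br}(K)$ do not by themselves resolve it.

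The paper's proof avoids both difficulties by working directly with Borovoi's cohomological invariant $\eta(X)\in H^1(K, M^{\mathrm{tor}}\to G^{\mathrm{tor}})$ rather than with the Brauer--Manin pairing. After replacing $G$ by the middle term $H$ of a flasque resolution (so that $H^{\mathrm{ss}}$ is simply connected, $H^{\mathrm{tor}}$ is quasi-trivial, and the stabilizers stay connected), Borovoi's Corollary 2.3 says that $X(K)\neq\varnothing$ as soon as $X$ has points at the \emph{archimedean} places (immediate from an odd-degree closed point, since $[\mathbb{C}:\mathbb{R}]=2$) and $\eta(X)=0$; no local points at finite places are needed. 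The class $\eta(X)$ is then killed by a restriction--corestriction argument in the hypercohomology group: quasi-triviality of $G^{\mathrm{tor}}$ gives $H^1(K,G^{\mathrm{tor}})=0$, so $H^1(K, M^{\mathrm{tor}}\to G^{\mathrm{tor}})$ injects into $H^2(K,M^{\mathrm{tor}})$, where $\cor\circ\res$ is multiplication by the degree; since $X$ has points over fields $k(x_i)$ of coprime degrees, $\eta(X)$ is annihilated by coprime integers and hence vanishes. If you want to salvage your approach, the cleanest fix is to abandon the Brauer--Manin formulation and adopt this abelianized-obstruction argument, which handles the local and global steps uniformly.
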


The main ingredients to prove the above proposition are flasque resolutions for connected linear algebraic groups, due to Colliot-Th\'el\`ene (\cite{CT} Prop. 3.1) and the cohomological obstruction constructed by Borovoi (cf. \cite{Bo99} 1.3-1.5).

\medskip
We first fix the notation.
Let $K$ be a number field  and $\ol{K}$ be an algebraic closure of $K$.
For a connected linear algebraic group $G$ defined over $K$, we denote by $G^{u}$ its unipotent radical.
Let $G^{red}=G/G^u$ and denote by $G^{ss}$ the derived group of $G^{red}$; $G^{tor}=G^{red}/G^{ss}$; $G^{ssu}=\ker(G\to G^{tor})$.

\medskip

Recall the cohomological obstruction constructed in \cite{Bo99}.
Let $G$ be a connected linear algebraic $K$-group.
Let $X$ be a homogeneous space under $G$.
Pick $x\in X(\ol{K})$ and let $\ol{M}=\Stab_{G_{\ol{K}}}(x)$.
Suppose that $G^{ss}$ is semi-simple simply connected and that $\ol{M}$ is connected.
We  construct a $K$-form $M^{tor}$ of $\ol{M}^{tor}$ (see \cite{Bo99} 1.2).

\medskip
For each $\sigma$ in $\Gal(\ol{K}/K)$, we choose $g_{\sigma}\in G(\ol{K})$ such that $g_{\sigma}\cdot {}^\sigma x=x$.
For each $\sigma$, $\tau$ in $\Gal(\ol{K}/K)$, set $u_{\sigma,\tau}=g_{\sigma\tau}({g_\sigma}^{\sigma} g_\tau)^{-1}$. Let
$\hat{u}_{\sigma,\tau}$ be the image of $u_{\sigma,\tau}$ in
$\ol{M}^{tor}(\ol{K})$, and $\hat{g}_{\sigma}$ be the image of
$g_{\sigma}$ in $\rG^{tor}(\ol{K})$.

\medskip

Let $M^{tor}\overset{i}{\to} G^{tor}$ be the complex with $M^{tor}$ in degree $-1$ and $G^{tor}$ in degree $1$,
where $i$ is induced by the inclusion $\ol{M}\to G_{\ol{K}}$.
We denote by $\eta(X)$ the class $[(\hat{u},\hat{g})]\in H^1(K, M^{tor}\to G^{tor})$.  If $X$ has a $K$-point, then $\eta(X)=0$ (see \cite{Bo99} 1.3-1.5).

\medskip
Let $K'$ be a finite extension of $K$.
Let $(s,t)$ be a hypercocycle in $$Z^1(K', M^{tor}\to G^{tor}).$$
Define $\cor_{K'/K}(s,t)=(\cor_{K'/K}(s),\cor_{K'/K}(t))$, where $\cor_{K'/K}(s)$ and $\cor_{K'/K}(t)$ are in the usual sense of corestriction on cochains in Galois cohomology (cf. \cite{NSW} Chap. I, \S 5).
\begin{prop}\label{corestriction}
Keep the notation as above.
Then $\cor_{K'/K}(s,t)$ is a hypercocyle in $Z^1(K, M^{tor}\to G^{tor})$.
\end{prop}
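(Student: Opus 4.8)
The plan is to unwind the definition of a hypercocycle in the complex $M^{tor}\overset{i}{\to}G^{tor}$ and check that corestriction, applied componentwise to cochains, preserves the cocycle conditions. Recall that a pair $(s,t)\in C^1(K',M^{tor})\times C^2(K',G^{tor})$ is a hypercocycle in $Z^1(K',M^{tor}\to G^{tor})$ precisely when $t\in Z^2(K',G^{tor})$ and $i_*(t)=\delta s$ in $C^2(K',G^{tor})$, i.e. $t_{\sigma,\tau,\rho}$ expressed in terms of the coboundary of $s$ (after pushing $s$ forward along $i$) is the usual $2$-cocycle condition; here I am using that $G^{tor}$ is abelian, so the complex computes hypercohomology in the ordinary (abelian) sense. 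The point is that corestriction on cochains, as recalled in \cite{NSW}, Chap.~I, \S5, commutes with the differentials $\delta$, and is a homomorphism of complexes of abelian groups. Since $i$ is a $K$-morphism of $K$-groups, $i_*$ also commutes with $\cor_{K'/K}$.

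The key steps, in order, would be: first, fix a set of coset representatives for $\Gal(\ol K/K)/\Gal(\ol K/K')$ and write down the explicit corestriction formula on $1$- and $2$-cochains with values in $M^{tor}(\ol K)$ and $G^{tor}(\ol K)$ respectively, exactly as in \cite{NSW}. Second, verify $\delta\bigl(\cor_{K'/K}(s)\bigr)=\cor_{K'/K}(\delta s)$ and $\delta\bigl(\cor_{K'/K}(t)\bigr)=\cor_{K'/K}(\delta t)$; the latter gives $\cor_{K'/K}(t)\in Z^2(K,G^{tor})$ from $t\in Z^2(K',G^{tor})$. Third, observe $i_*\bigl(\cor_{K'/K}(s)\bigr)=\cor_{K'/K}\bigl(i_*(s)\bigr)$ since $i_*$ is given by applying the fixed homomorphism $i\colon M^{tor}\to G^{tor}$ pointwise and corestriction only permutes/multiplies group elements. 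Fourth, combine: $i_*\bigl(\cor_{K'/K}(s)\bigr)=\cor_{K'/K}\bigl(i_*(s)\bigr)=\cor_{K'/K}(\delta t)=\delta\bigl(\cor_{K'/K}(t)\bigr)$, which is exactly the hypercocycle relation over $K$. Hence $\cor_{K'/K}(s,t)\in Z^1(K,M^{tor}\to G^{tor})$.

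There is one genuine subtlety rather than a pure obstacle: $M^{tor}$ need not be a \emph{constant} Galois module, so "corestriction on cochains" must be interpreted via the standard construction for cochains valued in an arbitrary (discrete) Galois module — one transports values along the representatives using the module action before multiplying, and it is at this point that one must check the formulas are well defined independently of the choice of representatives and really do commute with $\delta$ and with the $G(\ol K)$-equivariant map $i_*$. This is the step I expect to require the most care, though it is entirely formal once one writes it out; all of it is contained in \cite{NSW}, Chap.~I, \S5, so the proof can legitimately be quite short.

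\begin{proof}
This is a direct verification using the explicit description of corestriction on inhomogeneous cochains in Galois cohomology (\cite{NSW}, Chap.~I, \S5). Fix representatives for the cosets of $\Gal(\ol K/K')$ in $\Gal(\ol K/K)$ and let $\cor_{K'/K}$ denote the induced corestriction on cochains valued in $M^{tor}(\ol K)$ and in $G^{tor}(\ol K)$. Since $G^{tor}$ is a commutative $K$-group and $i\colon M^{tor}\to G^{tor}$ is a morphism of $K$-groups, $\cor_{K'/K}$ is compatible with the differentials $\delta$ and with the induced map $i_*$ on cochains; that is, $\delta\circ\cor_{K'/K}=\cor_{K'/K}\circ\delta$ and $i_*\circ\cor_{K'/K}=\cor_{K'/K}\circ i_*$.

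By definition, $(s,t)\in Z^1(K',M^{tor}\to G^{tor})$ means that $t\in Z^2(K',G^{tor})$ and $i_*(s)=\delta t$ in $C^2(K',G^{tor})$. Applying $\cor_{K'/K}$ and the compatibilities above, we obtain $\cor_{K'/K}(t)\in Z^2(K,G^{tor})$, since $\delta\cor_{K'/K}(t)=\cor_{K'/K}(\delta t)=0$, and moreover
$$ i_*\bigl(\cor_{K'/K}(s)\bigr)=\cor_{K'/K}\bigl(i_*(s)\bigr)=\cor_{K'/K}(\delta t)=\delta\bigl(\cor_{K'/K}(t)\bigr). $$
Hence the pair $\cor_{K'/K}(s,t)=\bigl(\cor_{K'/K}(s),\cor_{K'/K}(t)\bigr)$ satisfies the defining conditions for a hypercocycle in $Z^1(K,M^{tor}\to G^{tor})$.
\end{proof}
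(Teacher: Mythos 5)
Your argument is, in substance, the paper's argument: corestriction on cochains commutes with the coboundary operator and, because $i$ is defined over $K$, with the induced map $i_*$, so it carries the two defining conditions of a hypercocycle over $K'$ to the same conditions over $K$. The one concrete error is in your description of a $1$-hypercocycle for the complex $M^{tor}\to G^{tor}$ with $M^{tor}$ in degree $-1$: the $M^{tor}$-component is the $2$-cochain and the $G^{tor}$-component is the $1$-cochain, as in the paper's pair $(\hat u,\hat g)$ with $\hat u_{\sigma,\tau}\in \ol M^{tor}(\ol K)$ and $\hat g_{\sigma}\in G^{tor}(\ol K)$. You instead put $s\in C^1(K',M^{tor})$ and $t\in Z^2(K',G^{tor})$, and your compatibility condition is stated once as $i_*(t)=\delta s$ and once as $i_*(s)=\delta t$; with your degree assignments neither equation typechecks ($i_*$ only applies to $M^{tor}$-valued cochains, and $i_*(s)$ would be a $1$-cochain while $\delta t$ is a $3$-cochain). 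With the degrees corrected --- $s\in Z^2(K',M^{tor})$, $t\in C^1(K',G^{tor})$, and $i\circ s=\partial t^{-1}$ --- your chain of equalities $i_*\bigl(\cor_{K'/K}(s)\bigr)=\cor_{K'/K}\bigl(i_*(s)\bigr)=\cor_{K'/K}(\partial t^{-1})=\partial\bigl(\cor_{K'/K}(t^{-1})\bigr)$ is exactly the computation in the paper's proof, so the repair is purely notational and no mathematical idea is missing.
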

\begin{proof} If  $s$ is a 2-cocycle  in $Z^2(K',M^{tor})$, then $\cor_{K'/K}(s)$ is a cocycle in $Z^2(K,M^{tor})$.
As $i$ is defined over $K$, we have $i\circ\cor_{K'/K}(s)=\cor_{K'/K}(i\circ s)$.
On the other hand, as $(s,t)$ is a hypercocycle, we have $i\circ s=\partial t^{-1}$ and hence
 $\cor_{K'/K}(i\circ s)=\cor_{K'/K}(\partial t^{-1})=\partial \cor_{K'/K}(t^{-1})$.
The conclusion then follows.
\end{proof}

\medskip

\medskip

Since corestriction commutes with coboundary operators in Galois cohomology, by the above proposition we can define
$\cor[(s,t)]$ as  $[\cor(s,t)]$ in the hypercohomology $H^1(K,M^{tor}\to G^{tor})$.

\medskip

\begin{lemma}\label{res-cor}
Let $[(s,t)]\in H^1(K,M^{tor}\to G^{tor})$ and $K'$ be a finite extension of $K$.
Suppose that $G^{tor}$ is quasi-trivial. Then $$\cor_{K'/K}\res_{K'/K}[(s,t)]=[K':K][(s,t)].$$
\end{lemma}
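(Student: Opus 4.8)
The plan is to prove the identity directly at the level of hypercocycles, using the componentwise description of corestriction supplied by Proposition \ref{corestriction} and reducing the ``$M^{tor}$-part'' to the classical cochain form of the restriction--corestriction (transfer) formula in Galois cohomology; the quasi-triviality of $G^{tor}$ will enter precisely to guarantee $H^1(K,G^{tor})=0$, which is what kills the residual ``$G^{tor}$-part'' of the correction term. Throughout write $\Gamma_K=\Gal(\ol K/K)$ and $\Gamma_{K'}=\Gal(\ol K/K')$, so that $[\Gamma_K:\Gamma_{K'}]=[K':K]$.

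Here is the sequence of steps I would carry out. First, represent a class in $H^1(K,M^{tor}\to G^{tor})$ by a hypercocycle $(s,t)$, with $s$ a $2$-cocycle valued in $M^{tor}$, $t$ a $1$-cochain valued in $G^{tor}$, and $i\circ s=\partial t^{-1}$ as in the proof of Proposition \ref{corestriction}; by that proposition, $(\cor_{K'/K}\res_{K'/K}(s),\,\cor_{K'/K}\res_{K'/K}(t))$ is again a hypercocycle in $Z^1(K,M^{tor}\to G^{tor})$. Second, apply the cochain form of the transfer formula for $H^2(\Gamma_K,M^{tor})$ (\cite{NSW}, Chap.~I, \S 5): there is a $1$-cochain $\mu$ valued in $M^{tor}$ with $\cor_{K'/K}\res_{K'/K}(s)=[K':K]\,s+\partial\mu$. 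Third, set $n=\cor_{K'/K}\res_{K'/K}(t)-[K':K]\,t-i(\mu)$, a $1$-cochain valued in $G^{tor}$; using $i\circ s=\partial t^{-1}$, the fact that $i$ is defined over $K$ and hence commutes with $\cor_{K'/K}\res_{K'/K}$, and the relation $\partial(i\mu)=i\bigl(\cor_{K'/K}\res_{K'/K}(s)\bigr)-[K':K]\,i(s)$, a short computation in the hypercochain complex shows that $n$ is a $1$-cocycle of $\Gamma_K$ valued in $G^{tor}$. Fourth, since $G^{tor}$ is quasi-trivial, $H^1(\Gamma_K,G^{tor})=0$ (Shapiro's lemma plus Hilbert's Theorem~90), so $n=\partial\lambda$ for some $0$-cochain $\lambda$ valued in $G^{tor}$. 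Fifth, unwinding the definitions one finds that $\bigl(\cor_{K'/K}\res_{K'/K}(s),\cor_{K'/K}\res_{K'/K}(t)\bigr)-[K':K]\,(s,t)=(\partial\mu,\ \pm i(\mu)+\partial\lambda)$ is exactly the image of the hypercochain $(\mu,\lambda)$ under the differential of the hypercochain complex, i.e.\ a hypercoboundary; hence $\cor_{K'/K}\res_{K'/K}[(s,t)]=[K':K]\,[(s,t)]$ in $H^1(K,M^{tor}\to G^{tor})$.

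An essentially equivalent organization is to feed $H^1(K,G^{tor})=0$ into the hypercohomology exact sequence of the complex $M^{tor}\to G^{tor}$, which identifies $H^1(K,M^{tor}\to G^{tor})$ with the subgroup $\ker\!\big(H^2(K,M^{tor})\to H^2(K,G^{tor})\big)$ (and likewise over $K'$); since this identification is compatible with $\res_{K'/K}$ by functoriality and with $\cor_{K'/K}$ because the corestriction of Proposition \ref{corestriction} is componentwise and so commutes both with the map forgetting the $G^{tor}$-component and with the $\Gamma_K$-morphism $i$, the statement reduces to the standard identity $\cor_{K'/K}\res_{K'/K}=[K':K]$ on $H^2(K,M^{tor})$. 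The point that needs genuine care in either version is checking that the ad hoc componentwise corestriction introduced in Proposition \ref{corestriction} really is the corestriction in hypercohomology, so that the term-by-term transfer formula may be applied; this is also the step in which the quasi-triviality hypothesis on $G^{tor}$ is actually used, and it is the main thing to verify carefully — the remaining manipulations are routine sign bookkeeping in the mapping-cone complex.
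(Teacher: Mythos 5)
Your proof is correct and---especially in its second organization, via the exact sequence of complexes $1\to(1\to G^{tor})\to(M^{tor}\to G^{tor})\to(M^{tor}\to 1)\to 1$, the resulting injectivity of $H^1(K,M^{tor}\to G^{tor})\to H^2(K,M^{tor})$ once $H^1(K,G^{tor})=0$, and the classical restriction--corestriction identity in $H^2(K,M^{tor})$---is essentially the paper's own argument. One small correction to your closing remark: the quasi-triviality of $G^{tor}$ is used only to make $H^1(K,G^{tor})$ vanish (so that the comparison in $H^2(K,M^{tor})$ suffices), not in checking that the componentwise corestriction of Proposition \ref{corestriction} agrees with the hypercohomological one.
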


\begin{proof}
Consider the exact sequence of complexes
\[\xymatrix@C=0.5cm{
  1 \ar[r] & (1\to G^{tor}) \ar[r] & (M^{tor}\to G^{tor}) \ar[r] & (M^{tor}\to 1)  \ar[r] & 1 }.\]
From this we have the exact sequence
\[\xymatrix@C=0.5cm{
  H^1(K,G^{tor})=H^1(K, 1\to G^{tor})\ar[r] & H^1(K,M^{tor}\to G^{tor})}\]

 \[\xymatrix@C=0.5cm{ \ar[r] & H^1(K,M^{tor}\to 1)=H^2(K,M^{tor}).}\]

Using the above definition, we have
$$\cor_{K'/K}\res_{K'/K}[(s,t)]=[(\cor_{K'/K}\res_{K'/K}(s),\cor_{K'/K}\res_{K'/K}(t))].$$
From the restriction and corestriction in Galois cohomology  (cf. \cite{NSW} Chap. I, \S 5, Cor. 1.5.7), we have
that
$\cor_{K'/K}\res_{K'/K}[(s,t)]$ and $[K':K][(s,t)]$ have the same image in $H^2(K,M^{tor})$.
Since $G^{tor}$ is quasi-trivial, the group  $H^1(K,G^{tor})$ is trivial.
Thus $\cor_{K'/K}\res_{K'/K}[(s,t)]=[K':K][(s,t)]$.
\end{proof}

\medskip

\noindent
{\bf Proof of Proposition \ref{zero cycle}.}
Let $1\to S \to H \to G\to 1$ be a flasque resolution of G (see \cite{CT} 3.1).
We can regard $X$ as a homogeneous space under $H$.
Let $\ol{M}$ be the stabilizer  of a geometric point $x\in X(\ol{K})$ in  $G_{\ol{K}}$.
Since $\ol{M}$ and $S$ are connected, the preimage  of $\ol{M}$ in $H_{\ol{K}}$ is also connected.
By the definition of a flasque resolution we have $H^{ss}$ is a semi-simple simply connected group and
 $H^{tor}$ is quasi-trivial(\cite{CT} 2.2).
We may therefore assume that $G^{ss}$  is semi-simple simply connected with connected stabilizer and $G^{tor}$ is quasi-trivial.

\medskip
Let $V_\infty$ be the set of infinite places of $K$.
Since $X$ has a zero-cycle of degree one, there is a $K_v$-point of $X$ for all $v\in V_\infty$.
By \cite{Bo99} Cor. 2.3, the space $X$ has $K$-points if and only if $\eta(X)=0$.

\medskip

Since $X$ has a zero-cycle of degree one, by applying the restriction and corestriction map on  $H^1(K, M^{tor}\to G^{tor})$ and by Lemma \ref{res-cor} we have $\eta(X)=0$. Thus $X$ has a $K$-point.

\begin{remark}
If we replace the number field $K$ with a global function field and assume that $G$ is a reductive group,
then $G$ still has a flasque resolution (\cite{CT} 3.1) and our arguments above still work in this case (\cite{DH} \S2).
\end{remark}

\bigskip
\bigskip
Eva Bayer--Fluckiger

EPFL-FSB-MATH

Station 8

1015 Lausanne, Switzerland

\medskip

eva.bayer@epfl.ch

\bigskip
\bigskip
Ting-Yu Lee

NTU-MATH

Astronomy Mathematics Building 5F,

No. 1, Sec. 4, Roosevelt Rd.,

Taipei 10617, Taiwan (R.O.C.)

\medskip
tingyulee@ntu.edu.tw

\bigskip
\bigskip
Raman Parimala

Department of Mathematics $ \&$ Computer Science

Emory University

Atlanta, GA 30322, USA.

\medskip
raman.parimala@emory.edu

\end{document}